\DeclareMathOperator{\grad}{grad}
\theoremstyle{plain}
\newtheorem{teorema}{Theorem}[section]
\newtheorem{proposicion}[teorema]{Proposition}
\newtheorem{lema}[teorema]{Lemma}
\newtheorem{corolario}[teorema]{Corollary}
\newtheorem{ejemplo}[teorema]{Example}
\theoremstyle{definition}
\newtheorem{definicion}[teorema]{Definition}
\newtheorem{remark}[teorema]{Remark}
\begin{document}

\title{Null screen quasi-conformal hypersurfaces in semi-Riemannian manifolds and applications}

\author[M. Navarro]{Matias Navarro}
\address{%
Facultad de Matem\'aticas \\
Universidad Aut\'onoma de Yucat\'an - UADY \\
Perif\'erico Norte, Tablaje 13615 \\
M\'erida, Yucat\'an \\
M\'exico}
\email{matias.navarro@correo.uady.mx}

\author[O. Palmas]{Oscar Palmas}
\address{%
Departamento de Matem\'aticas \\
Facultad de Ciencias \\
Universidad Nacional Aut\'onoma de M\'exico - UNAM \\
CP 04510, Ciudad de M\'exico \\
M\'exico}
\email{oscar.palmas@ciencias.unam.mx}

\author[D. Solis]{Didier A. Solis}
\address{%
Facultad de Matem\'aticas \\
Universidad Aut\'onoma de Yucat\'an - UADY \\
Perif\'erico Norte, Tablaje 13615 \\
M\'erida, Yucat\'an \\
M\'exico}
\email{didier.solis@correo.uady.mx}

\thanks{M. Navarro was partially supported by UADY under Project FMAT-2017-0003. O. Palmas was partially supported by UNAM under Project PAPIIT-DGAPA IN113516. D. A. Solis was partially supported by UADY under Projects P/PFCE-2017-31MSU0098J-13 and CEA-SAB-011-2017.}
\subjclass{Primary 53B30; Secondary 53C50}
\keywords{Null hypersurfaces, isoparametric hypersurfaces, Einstein hypersurfaces}

\maketitle

\begin{abstract}
{We introduce a class of null hypersurfaces of a semi-Riemannian manifold, namely, screen quasi-conformal hypersurfaces, whose geometry may be studied through the geometry of its screen distribution. In particular, this notion allows us to extend some results of previous works to the case in which the sectional curvature of the ambient space is different from zero. As applications, we study umbilical, isoparametric and Einstein null hypersurfaces in Lorentzian space forms and provide several classification results.}
\end{abstract}

\section{Introduction}

The existence of null hypersurfaces is one of the most remarkable features both in semi-Riemannian geometry and General Relativity \cite{MR0424186,MR719023}.  Despite of the fact that numerous aspects of General Relativity have their mathematical foundations in different geometrical properties of null submanifolds, it has only been recently that a mathematical framework for null submanifold geometry similar to its classical Riemannian counterpart was developed \cite{MR1383318,MR2735275, MR2598375}. In particular, a good amount of research has been devoted to analyze the geometric structure of null hypersurfaces $M^{n+1}$ immersed in Lorentzian manifolds $\bar{M}^{n+2}$. 
In this case, the choice of an $n$-dimensional  spacelike distribution $S(TM)$, the so-called \emph{screen distribution}, plays a fundamental role. In the physical scenario, screen distributions arise naturally as tangent spaces to spacelike foliations of event horizons, like the one generated by the surface of a collapsing star \cite{MR1172768, MR757180}.

Some null hypersurfaces $M$ admit a very special kind of screen distributions, whose geometry closely resembles the geometry of $M$. These hypersurfaces are known as \emph{screen conformal} and have been studied extensively \cite{MR2039644, MR2662971}. Intuitively, a screen conformal hypersurface inherits directly the geometric aspects of its screen distribution. For instance, we have that if $M\subset\bar{M}$ is screen conformal, then its screen distribution  is integrable and its integral manifolds are totally geodesic (or totally umbilical) as codimension 2 submanifolds of $\bar{M}$ if and only if $M$ is a  totally geodesic (or totally umbilical) null hypersurface of $\bar{M}$ (refer to  theorem 2.2.9 in \cite{MR2598375}).

Thus, screen conformal hypersurfaces are a natural class to explore when it comes to classifying null hypersurfaces satisfying relevant geometric conditions, since in this context we are able to translate the problem from a degenerate (null) setting to a simpler Riemannian (spacelike) scenario. Nevertheless, an important class of examples of interest both to physics and mathematics does not fit in this setting, namely, the class of null hypersurfaces of Generalized Robertson-Walker (GRW) spacetimes. In this work we show that such examples fall in a broader class which is called \emph{screen quasi-conformal}.

As an application of this notion we are able to establish conditions for integrability and umbilicity of the screen distribution. Furthermore,
we find Cartan identities for null screen isoparametric hypersurfaces embedded in Lorentzian space forms. From these identities --which closely resemble their semi-Riemannian counterparts-- we can recover and improve the classification results obtained by Atindogbe et al \cite{MR3270005}.  Another application we explore in this work is related to the classification of null Einstein hypersurfaces. First introduced by Duggal and Jin, these hypersurfaces were studied in depth in the screen conformal setting and sharp classification results were obtained when $\bar c=0$ \cite{MR2039644, MR2735275, MR2662971}. Here we extend those results to a broader class of null hypersurfaces in GRW spacetimes of non-vanishing curvature.

This paper is divided as follows: in section \ref{prelim} we lay out the basic theory concerning null hypersurfaces of Lorentzian manifolds and their screen distributions. Then, in section \ref{sec:screen} we establish the notion of screen quasi-conformal hypersurfaces and prove the main general results pertaining such hypersurfaces. In section \ref{sec:cartan} we use the tools developed so far to derive Cartan type formulas and provide a classification of null isoparametric hypersurfaces in Lorentzian space forms. Finally, in section \ref{sec:Einstein} we study null Einstein hypersurfaces in Lorentzian space forms.

\section{Preliminaries}\label{prelim}

Let $(\bar M^{n+2},\bar g)$ be a $(n+2)$-dimensional, semi-Riemannian manifold with metric $\bar g$ of constant index $q\in\{1,\dots,n+1\}$. A hypersurface $M$ of $\bar M$ is {\em null} if the {\em radical bundle} $\mathrm{Rad}(TM)=TM\cap TM^\perp$ is different from zero at each $p\in M$. As in \cite{MR1383318} and \cite{MR2598375}, a {\em screen distribution} $S(TM)$ on $M$ is defined as a non-degenerate vector bundle complementary to $TM^\perp$. A null hypersurface with an specific screen distribution is denoted $(M,g,S(TM))$. From \cite{MR1383318}, we know that there is a vector bundle $\mathrm{tr}(TM)$ of rank $1$ over $M$, called the {\em transversal bundle}, such that for each non-zero section $\xi\in\Gamma(TM^\perp)$ defined in an open set $U\subset M$ there is a unique section $N\in\Gamma(\mathrm{tr}(TM))$ such that
\begin{equation*}
\bar g(\xi,  N )=1, \quad
\bar g( N , N )=\bar g( N ,X)=0
\end{equation*}
for each $X\in\Gamma(S(TM\vert_{U}))$. We will work hereafter in a maximal neighbourhood $U$ with these properties and omit the reference to it. We write
\begin{equation}  \label{eq:descomposicion0}
T\bar M\vert_M=TM\oplus \mathrm{tr}(TM).
\end{equation}
and\begin{equation}  \label{eq:descomposicion1}
TM=S(TM)\oplus_{\mathrm{orth}} \mathrm{Rad}(TM),
\end{equation}
so that
\begin{equation*}
T\bar M\vert_M=S(TM)\oplus_{\mathrm{orth}}(\mathrm{Rad}(TM)\oplus\mathrm{tr}(TM)).
\end{equation*}

If $\bar\nabla$ denotes the Levi-Civita connection of $\bar M$, $P$ the projection of $\Gamma(TM)$ onto $\Gamma(S(TM))$ using the decomposition (\ref{eq:descomposicion1}),$\eta$ the $1$-form defined in $\Gamma(TM)$ by 
\begin{equation*}
\eta(X)=\bar g(X,N)
\end{equation*} 
and $\tau$ the $1$-form on $\Gamma(TM)$ given by
\begin{equation}\label{eq:tau}
\tau(X)=\bar g(\bar\nabla_X N ,\xi)=\bar g(\bar\nabla_X^t N ,\xi),
\end{equation}
then the \emph{local Gauss-Weingarten formulae} relative to the decompositions (\ref{eq:descomposicion0}) and (\ref{eq:descomposicion1}) are
\begin{equation}  \label{eq:gauss1}
\begin{array}{rcl}
\bar\nabla_{X}Y & = &  \nabla_{X}Y+h(X,Y) = \nabla_{X}Y+B(X,Y) N, \\[0.1cm]
\bar\nabla_{X} N & = & -A_{ N }X+\nabla_X^tN = -A_{ N }X+\tau(X)  N; \\[0.1cm]
\nabla_{X}PY & = & \nabla_{X}^*PY+h^*(X,PY) = \nabla_{X}^*PY+C(X,PY)\xi; \\[0.1cm]
\nabla_{X} \xi & = & -A_{\xi}^*X+\nabla_X^{*t} \xi= -A_{\xi}^*X-\tau(X) \xi .
\end{array}
\end{equation}
for all $X,Y\in\Gamma(TM)$. Here $\nabla$, $\nabla^t$, $\nabla^*$ and $\nabla^{*t}$ denote the induced connections on $TM$, $\mathrm{tr}(TM)$, $S(TM)$ and $\mathrm{Rad}(TM)$, respectively; $h$ and $h^*$ are the second fundamental forms of $M$ and $S(TM)$, while
\begin{equation}\label{eq:2ffB}
B(X,Y) = \bar g( \bar\nabla_{X}Y, \xi ) = \bar g(h(X,Y),\xi)=g( A_\xi^*X,Y),
\end{equation}
\begin{equation}\label{eq:2ffC}
C(X,PY) = \bar g( \nabla_X PY, N)= \bar g(h^*(X,PY),\xi)=g( A_NX,PY),
\end{equation}
are the \emph{local second fundamental forms} of $M$ and $S(TM)$. $A_ N $ and $A_\xi^*$ are the {\em local shape operators} of $M$ and $S(TM)$.

\begin{proposicion}\label{prop:propiedades}
The following properties hold true:
\begin{enumerate}
\item $\bar g(A_NX,N)=0$ for every $X\in\Gamma(TM)$;
\item $A_\xi^*\xi=0$;
\item $A_\xi^*$ is symmetric relative to $g$, that is,
\[
g(A_ \xi^*X,Y)=g(X,  A_\xi^*Y)
\]
for each $X,Y\in\Gamma(TM)$.
\item The following conditions are equivalent:
\begin{enumerate}
\item The screen distribution $S(TM)$ is integrable;
\item $h^*(X,Y)=h^*(Y,X)$ for any $X,Y\in\Gamma(S(TM))$;
\item $A_ N$ is symmetric on $\Gamma(S(TM))$ relative to $g$, that is,
\[
g( A_ N  X,Y) =g( X,  A_ N  Y) 
\]
for each $X,Y\in\Gamma(S(TM))$.
\end{enumerate}
\item\label{prop:propiedades:item5} For each $X,Y,Z\in\Gamma(TM)$,
\[
(\nabla_Xg)(Y,Z)=B(X,Y)\eta(Z)+B(X,Z)\eta(Y).
\]
\end{enumerate}
\end{proposicion}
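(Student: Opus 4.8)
The plan is to derive all five assertions from a single mechanism: differentiating the three defining relations $\bar g(\xi,\xi)=0$, $\bar g(N,N)=0$ and $\bar g(\xi,N)=1$ and substituting the Gauss--Weingarten formulae \eqref{eq:gauss1}. Before treating the individual items I would isolate one preliminary fact that underpins (2)--(4): the local second fundamental form $B$ is symmetric. Indeed, $B(X,Y)-B(Y,X)=\bar g(\bar\nabla_XY-\bar\nabla_YX,\xi)=\bar g([X,Y],\xi)$, and since $[X,Y]\in\Gamma(TM)$ while $\xi\in\mathrm{Rad}(TM)\subset TM^\perp$, the right-hand side vanishes. The very same identity, read through the Gauss formula $\bar\nabla_XY=\nabla_XY+B(X,Y)N$, shows that the induced connection $\nabla$ is torsion-free, which I will need in (4).

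For item (1) I would differentiate $\bar g(N,N)=0$ to obtain $\bar g(\bar\nabla_XN,N)=0$ and then insert the Weingarten formula $\bar\nabla_XN=-A_NX+\tau(X)N$, where the $\tau$-term drops out because $\bar g(N,N)=0$; this leaves $\bar g(A_NX,N)=0$. Item (3) is then immediate: from \eqref{eq:2ffB} we have $g(A_\xi^*X,Y)=B(X,Y)$, so the symmetry of $B$ established above yields $g(A_\xi^*X,Y)=g(X,A_\xi^*Y)$ at once.

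Item (2) is the most delicate step, and I expect it to be the main obstacle, because one must produce a vanishing \emph{vector} rather than a mere orthogonality relation while $g$ is degenerate on $TM$ with radical $\mathrm{span}(\xi)$. My plan is first to show $B(\cdot,\xi)\equiv 0$ by differentiating $\bar g(\xi,\xi)=0$, giving $g(A_\xi^*X,\xi)=0$ for all $X$; combined with the symmetry of $B$ this upgrades to $g(A_\xi^*\xi,Y)=B(\xi,Y)=0$ for every $Y\in\Gamma(TM)$. The crucial remaining observation is that the fourth line of \eqref{eq:gauss1} exhibits $-A_\xi^*X$ as the $S(TM)$-component of $\nabla_X\xi$ (the remainder $\nabla_X^{*t}\xi=-\tau(X)\xi$ lying in $\mathrm{Rad}(TM)$), so $A_\xi^*\xi\in\Gamma(S(TM))$; since $g$ restricts non-degenerately to the screen, the orthogonality $g(A_\xi^*\xi,Y)=0$ forces $A_\xi^*\xi=0$.

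For item (4) I would prove the cycle (a)$\Leftrightarrow$(b)$\Leftrightarrow$(c). Using torsion-freeness of $\nabla$ together with the third line of \eqref{eq:gauss1}, for $X,Y\in\Gamma(S(TM))$ one computes $[X,Y]=(\nabla_X^*Y-\nabla_Y^*X)+(C(X,Y)-C(Y,X))\xi$; the screen part always lies in $S(TM)$, so $[X,Y]\in\Gamma(S(TM))$ precisely when $C$---equivalently $h^*$---is symmetric, which is (a)$\Leftrightarrow$(b). The equivalence (b)$\Leftrightarrow$(c) is then read directly off the identity $C(X,PY)=g(A_NX,PY)$ in \eqref{eq:2ffC}. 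Finally, item (5) follows by expanding $(\nabla_Xg)(Y,Z)=X\,\bar g(Y,Z)-g(\nabla_XY,Z)-g(Y,\nabla_XZ)$, replacing each $\bar\nabla$ via the Gauss formula and recognizing $\bar g(N,\cdot)=\eta$; the terms $B(X,Y)\eta(Z)$ and $B(X,Z)\eta(Y)$ are exactly the transversal contributions that survive.
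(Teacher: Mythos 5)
Your proof is correct in all five items, and there is nothing of substance to compare, because the paper itself does not prove this proposition: it simply defers to the standard references on null hypersurface geometry. Your route---differentiating the defining relations $\bar g(\xi,\xi)=0$, $\bar g(N,N)=0$, $\bar g(\xi,N)=1$, substituting the Gauss--Weingarten formulae, using torsion-freeness of $\bar\nabla$ for the symmetry of $B$, and, crucially for item (2), combining $g(A_\xi^*\xi,\cdot)=0$ with the facts that $A_\xi^*$ is $S(TM)$-valued and $g$ is non-degenerate on $S(TM)$---is exactly the argument found in those references, so you have correctly supplied the details the paper omits.
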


For the proofs, see the references \cite{MR1383318} and \cite{MR2598375}.
\begin{definicion}\label{defi:derivadas}
If $X,Y,Z\in\Gamma(TM)$, we define
\begin{equation*}
\begin{array}{rcl}
(\nabla_Xh)(Y,Z) & = & \nabla_X^t(h(Y,Z))-h(\nabla_XY,Z)-h(Y,\nabla_XZ),\\[0.1cm]
(\nabla_Xh^*)(Y,PZ) & = & \nabla_X^{*t}(h^*(Y,PZ))-h^*(\nabla_XY,PZ)-h^*(Y,\nabla_X^*PZ),\\[0.1cm]
(\nabla_XB)(Y,Z) & = & X(B(Y,Z))-B(\nabla_XY,Z)-B(Y,\nabla_XZ),\\[0.1cm]
(\nabla_XC)(Y,PZ) & = & X(C(Y,PZ))-C(\nabla_XY,PZ)-B(Y,\nabla_X^*PZ),\\[0.1cm]
(\nabla_XA_N)Y & = & \nabla_X(A_NY)-A_N(\nabla_XY),\\[0.1cm]
(\nabla_XA_\xi^*)Y & = & \nabla_X(A_\xi^*Y)-A_\xi^*(\nabla_XY),
\end{array}
\end{equation*}
while for $X,Y\in\Gamma(S(TM))$, we define
\begin{equation*}
(\nabla_X^*A_\xi^*)Y = \nabla_X^*(A_\xi^*Y)-A_\xi^*(\nabla_X^*Y)
= \nabla_X^*(A_\xi^*Y)-A_\xi^*(\nabla_XY).
\end{equation*}
\end{definicion}

Denote by $\bar R$, $R$ and $R^*$ the curvature tensors of $\bar\nabla$, $\nabla$ and $\nabla^*$, respectively. For every $X,Y,Z\in\Gamma(TM)$ then we have (see \cite{MR1383318})
\begin{eqnarray}\label{eq:2.21Ciriaco}
\bar R(X,Y)Z =R(X,Y)Z&+&A_{h(X,Z)}Y-A_{h(Y,Z)}X\\ \nonumber
&+&(\nabla_Xh)(Y,Z)-(\nabla_Yh)(X,Z),
\end{eqnarray}
from which we obtain the following \emph{Gauss-Codazzi equations}:

\begin{eqnarray}\label{eq:2.22Ciriaco1}
g(R(X,Y)PZ,PW) = g(R^*(X,Y)PZ,PW)+C(X,PZ)B(Y,PW) \\ \nonumber
-C(Y,PZ)B(X,PW),
\end{eqnarray}
\begin{eqnarray}\label{eq:2.22Ciriaco2}
g(R(X,Y)Z,\xi) &=& g((\nabla_Xh)(Y,Z)-(\nabla_Yh)(X,Z),\xi ), \\
\bar{g}(R(X,Y)Z,N)&=& \bar{g}(\bar{R}(X,Y)Z,N).
\end{eqnarray}
Furthermore, we have
\begin{equation}\label{eq:2.23Ciriaco}
\bar g(\bar R(X,Y)\xi,N) = C(Y,A_\xi^*X)-C(X,A_\xi^*Y)-2d\tau(X,Y),
\end{equation}
where
\[
2d\tau(X,Y)=X(\tau(Y))-Y(\tau(X))-\tau([X,Y]).
\]

Let us recall that the connection $\nabla$, though not metric in general, it is always torsion free. Thus its Riemann tensor satisfies some of the expected symmetries.

\begin{proposicion}
The Riemann tensor of $\nabla$ satisfies the following relations for all $X,Y,Z\in\Gamma (TM)$:
\begin{description}
\item[1] $R(X,Y)Z=-R(Y,X)Z$,
\item[2] $R(X,Y)Z+R(Y,Z)X+R(Z,X)Y=0$, and
\item[3] $(\nabla_XR)(Y,Z)+(\nabla_YR)(Z,X)+(\nabla_ZR)(X,Y)=0$. 
\end{description}
Moreover, for all $X,Y,Z,W\in S(TM)$, we have:
\begin{description}
\item[4]
$g(R(X,Y)Z,W) + g(R(X,Y)W,Z)= g(h(Y,Z)h^*(X,W))$  \\
$+g(h(Y,W),h^*(X,Z)) -g(h(X,W), h^*(Y,Z)) -g(h(X,Z),h^*(Y,W)),$
\item[5] $g(R(Y,W)X,Z)-g(R(X,Z)Y,W)=g(R(Z,X)W,Y)-g(R(W,Y)Z,X)$.
\end{description}
\end{proposicion}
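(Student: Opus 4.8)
The plan is to separate the purely affine identities \textbf{1}--\textbf{3}, which use only that $\nabla$ is torsion-free, from the screen-dependent identities \textbf{4}--\textbf{5}, which exploit the geometry of $S(TM)$. For \textbf{1} I would expand $R(X,Y)Z=\nabla_X\nabla_Y Z-\nabla_Y\nabla_X Z-\nabla_{[X,Y]}Z$ and note that interchanging $X$ and $Y$ reverses the sign, since $[X,Y]=-[Y,X]$. For \textbf{2} and \textbf{3} the essential input is torsion-freeness, in the form $[X,Y]=\nabla_X Y-\nabla_Y X$: substituting this, the first Bianchi identity \textbf{2} collapses to the Jacobi identity for the Lie bracket once the second-order terms cancel in cyclic pairs, and the differential Bianchi identity \textbf{3} follows from the classical computation in which every term not annihilated by torsion-freeness cancels cyclically. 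No modification of these arguments is needed in the null setting.

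The crucial observation for \textbf{4} is that the induced screen connection $\nabla^*$ is metric on $S(TM)$. Indeed, differentiating $g(Y,Z)=\bar g(Y,Z)$ for $Y,Z\in\Gamma(S(TM))$ and inserting the Gauss--Weingarten formulae (\ref{eq:gauss1}), the terms along $\xi$ and $N$ drop out because both are $\bar g$-orthogonal to $S(TM)$, leaving $X(g(Y,Z))=g(\nabla^*_XY,Z)+g(Y,\nabla^*_XZ)$. Consequently $R^*$ is skew-symmetric in its last two arguments, $g(R^*(X,Y)Z,W)=-g(R^*(X,Y)W,Z)$. I would then invoke the Gauss--Codazzi equation (\ref{eq:2.22Ciriaco1}) to write $g(R(X,Y)Z,W)$ as $g(R^*(X,Y)Z,W)$ plus the correction $C(X,Z)B(Y,W)-C(Y,Z)B(X,W)$, and symmetrize in $Z$ and $W$. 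The $R^*$ contributions cancel by the skew-symmetry just established, and the surviving $B$--$C$ terms are exactly $g(h(Y,Z),h^*(X,W))+g(h(Y,W),h^*(X,Z))-g(h(X,W),h^*(Y,Z))-g(h(X,Z),h^*(Y,W))$ once one recalls $h(\cdot,\cdot)=B(\cdot,\cdot)N$, $h^*(\cdot,\cdot)=C(\cdot,\cdot)\xi$ and $\bar g(N,\xi)=1$.

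Finally, for \textbf{5} the plan is to run the classical argument that ordinarily yields the pairing symmetry $g(R(X,Y)Z,W)=g(R(Z,W)X,Y)$, namely to write the first Bianchi identity \textbf{2} for the four cyclic triples drawn from $X,Y,Z,W$ and form the alternating sum of the four resulting relations. I would then reduce using the skew-symmetry \textbf{1} in the first slot together with the \emph{modified} skew-symmetry \textbf{4} in the last slot. In the metric case the correction terms vanish and one obtains exact pairing symmetry; here the $B$--$C$ corrections coming from \textbf{4} survive, and the content of \textbf{5} is precisely that these corrections are invariant under reversing all four arguments, so that the failure of pairing symmetry is itself symmetric in the required way. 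I expect this last bookkeeping --- tracking the signs of the twelve terms and checking that the non-metric corrections assemble correctly --- to be the main obstacle; everything else is either classical or a direct consequence of the metric character of $\nabla^*$.
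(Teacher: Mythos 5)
The paper states this proposition without proof (it is imported from the standard references on lightlike hypersurfaces), so there is no in-text argument to compare yours against; I can only judge the proposal on its own terms. Items \textbf{1}--\textbf{3} are fine: they use nothing beyond torsion-freeness of $\nabla$. Item \textbf{4} is also correct as you set it up: the metricity of $\nabla^*$ on $S(TM)$ is exactly what kills the $R^*$ contributions in (\ref{eq:2.22Ciriaco1}), and the surviving terms match the stated right-hand side. (Equivalently, one can pair (\ref{eq:2.21Ciriaco}) with $W\in\Gamma(S(TM))$ --- the $(\nabla h)$ terms die since they lie in $\mathrm{tr}(TM)$ --- and use the skew-symmetry of $\bar g(\bar R(X,Y)\,\cdot\,,\cdot)$; this yields the same four $B$--$C$ terms without invoking $R^*$ at all.)

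The genuine gap is in item \textbf{5}, precisely at the step you flag as ``bookkeeping'' and do not carry out. Running the four-fold Bianchi sum and substituting the modified skew-symmetry \textbf{4} (or, equivalently, testing \textbf{5} directly against (\ref{eq:2.21Ciriaco}) using the pair symmetry of $\bar R$), the correction terms do \emph{not} cancel: the two sides of \textbf{5} differ by
\begin{multline*}
B(X,Y)\bigl(C(Z,W)-C(W,Z)\bigr)+B(Z,W)\bigl(C(X,Y)-C(Y,X)\bigr)\\
+B(X,W)\bigl(C(Y,Z)-C(Z,Y)\bigr)+B(Y,Z)\bigl(C(W,X)-C(X,W)\bigr).
\end{multline*}
Since $C(U,V)-C(V,U)=\eta([U,V])$ for $U,V\in\Gamma(S(TM))$, this residue is controlled by the non-integrability of the screen. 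It is not an algebraic identity in a symmetric $B$ and the skew part of $C$: taking $B$ supported on a single screen direction and $Z,W$ spanning a $2$-plane on which $\eta([Z,W])\neq 0$ leaves a single nonvanishing term. So your plan for \textbf{5} cannot close as written; the ``failure of pairing symmetry'' is \emph{not} automatically symmetric in the required way. You need the additional input that $C$ is symmetric on $S(TM)$ (equivalently, that $S(TM)$ is integrable, by Proposition \ref{prop:propiedades}(4)), under which the residue vanishes and the rest of your argument goes through. That hypothesis is available everywhere this proposition is actually used in the paper, since screen quasi-conformality forces $A_N$ to be symmetric and hence $S(TM)$ to be integrable (Theorem \ref{teo:umbilica}), but it should be stated explicitly in the proof of \textbf{5}.
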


We will be interested in ambient manifolds $\bar M$ with constant curvature, for which we have the following result and its Corollary; see the proof in \cite{MR3270005}:

\begin{proposicion}\label{lema:lema3.3ciriaco}(\cite[p. 34]{MR3270005}) 
Let $(\bar M_{\bar c}^{n+2},\bar g)$ be a semi-Riemannian manifold of constant curvature $\bar c$ and $M$ a null hypersurface of $\bar M$. For any $X,Y,Z\in\Gamma(TM)$ we have
\begin{enumerate}
\item\label{lema:lema3.3ciriaco:item1} $R(X,Y)Z=\bar c(g(Y,Z)X-g(X,Z)Y)-B(X,Z)A_NY+B(Y,Z)A_NX$;
\item $(\nabla_XB)(Y,Z)-(\nabla_YB)(X,Z)=B(X,Z)\tau(Y)-B(Y,Z)\tau(X)$;
\item $B(A_NY,X)-B(A_NX,Y)=2d\tau(X,Y)$;
\item $(\nabla_YA_N)X-(\nabla_XA_N)Y=\bar c(\eta(Y)X-\eta(X)Y)+\tau(Y)A_NX-\tau(X)A_NY$;
\item\label{item:5prop} $(\nabla_XA_\xi^*)Y-(\nabla_YA_\xi^*)X=\tau(Y)A_\xi^*X-\tau(X)A_\xi^*Y-2d\tau(X,Y)\xi$;
\item $\nabla_XPZ=\nabla_XZ-X(\eta(Z))\xi+\eta(Z)A_\xi^*X+\eta(Z)\tau(X)\xi$.
\end{enumerate}
\end{proposicion}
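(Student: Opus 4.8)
The unifying idea is that the constant curvature hypothesis amounts to
\[
\bar R(X,Y)W = \bar c\,\bigl(\bar g(Y,W)X - \bar g(X,W)Y\bigr)
\]
for all $X,Y,W$, so the plan is to substitute $W=Z$, $W=N$ and $W=\xi$ into this expression and then extract the tangential, transversal and radical components of the resulting identities by means of the Gauss-Weingarten formulae (\ref{eq:gauss1}). Two elementary observations will be used repeatedly. First, since $\bar g(\xi,\xi)=0$ and $\xi$ is orthogonal to $S(TM)$, one has $\bar g(X,\xi)=0$ for every $X\in\Gamma(TM)$, whence $\bar R(X,Y)\xi=0$. Second, $\bar g(X,N)=\eta(X)$, so $\bar R(X,Y)N=\bar c(\eta(Y)X-\eta(X)Y)$, which is tangent to $M$.

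For items (1) and (2), I would feed the constant-curvature expression for $\bar R(X,Y)Z$ into the general relation (\ref{eq:2.21Ciriaco}). Writing $h(X,Z)=B(X,Z)N$ so that $A_{h(X,Z)}Y=B(X,Z)A_NY$, and computing from Definition \ref{defi:derivadas} together with the Weingarten formula for $\nabla_X^tN$ that
\[
(\nabla_X h)(Y,Z)=\bigl[(\nabla_X B)(Y,Z)+B(Y,Z)\tau(X)\bigr]N,
\]
the identity (\ref{eq:2.21Ciriaco}) splits cleanly: its $TM$-component (all terms except the ones along $N$) yields item (1), while its $N$-component yields item (2). Here the only point to verify is that $A_NY$ is tangent to $M$, which follows from property (1) of Proposition \ref{prop:propiedades}.

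Items (3), (4) and (5) come from applying the curvature operator to $\xi$ and $N$. For item (3) I would read off (\ref{eq:2.23Ciriaco}) with left-hand side $\bar g(\bar R(X,Y)\xi,N)=0$, obtaining $2d\tau(X,Y)=C(Y,A_\xi^*X)-C(X,A_\xi^*Y)$, and then convert $C$ into $B$: since $A_\xi^*X\in\Gamma(S(TM))$, combining (\ref{eq:2ffB})--(\ref{eq:2ffC}) with the symmetry of $A_\xi^*$ (Proposition \ref{prop:propiedades}(3)) gives
\[
C(Y,A_\xi^*X)=g(A_NY,A_\xi^*X)=g(A_\xi^*(A_NY),X)=B(A_NY,X),
\]
which produces the stated formula. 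For items (4) and (5) the plan is to compute $\bar R(X,Y)N$ and $\bar R(X,Y)\xi$ from scratch by expanding $\bar\nabla_X\bar\nabla_Y(\cdot)-\bar\nabla_Y\bar\nabla_X(\cdot)-\bar\nabla_{[X,Y]}(\cdot)$ through the Gauss-Weingarten formulae and using $[X,Y]=\nabla_XY-\nabla_YX$, as $\nabla$ is torsion free. Collecting the tangential part of $\bar R(X,Y)N=\bar c(\eta(Y)X-\eta(X)Y)$ and recognizing the combinations $(\nabla_XA_N)Y$ of Definition \ref{defi:derivadas} yields item (4); collecting the $S(TM)$- and $\mathrm{Rad}(TM)$-components of $\bar R(X,Y)\xi=0$, and recognizing $(\nabla_XA_\xi^*)Y$ together with the $-2d\tau(X,Y)\xi$ term, yields item (5).

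Finally, item (6) is a direct computation: writing $Z=PZ+\eta(Z)\xi$ and differentiating gives $\nabla_XPZ=\nabla_XZ-X(\eta(Z))\xi-\eta(Z)\nabla_X\xi$, and substituting the Weingarten formula $\nabla_X\xi=-A_\xi^*X-\tau(X)\xi$ finishes the proof. The main obstacle throughout is not conceptual but organizational: one must carefully separate each expression into its $S(TM)$, $\mathrm{Rad}(TM)$ and $\mathrm{tr}(TM)$ pieces and match the resulting terms against the covariant-derivative combinations of Definition \ref{defi:derivadas}, since the non-metricity of $\nabla$ and the simultaneous presence of the connections $\nabla,\nabla^t,\nabla^*,\nabla^{*t}$ make it easy to misplace a $\tau$- or $\eta$-term or to overlook a radical contribution.
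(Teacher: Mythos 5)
Your proposal is correct. Note, however, that the paper does not actually prove this proposition: it defers entirely to the cited reference \cite{MR3270005}, so there is no internal proof to compare against. Your derivation is the standard one and all six items check out: items (1)--(2) follow from splitting (\ref{eq:2.21Ciriaco}) into its $TM$- and $N$-components after computing $(\nabla_Xh)(Y,Z)=[(\nabla_XB)(Y,Z)+B(Y,Z)\tau(X)]N$; item (3) follows from (\ref{eq:2.23Ciriaco}) with $\bar R(X,Y)\xi=0$ together with the symmetry of $A_\xi^*$; items (4)--(5) follow from expanding $\bar R(X,Y)N$ and $\bar R(X,Y)\xi$ via the Gauss--Weingarten formulae; and item (6) is the direct computation you describe. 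The only detail worth making explicit in item (5) is that the $\mathrm{tr}(TM)$-component of your expansion of $\bar R(X,Y)\xi$, namely $-[B(X,A_\xi^*Y)-B(Y,A_\xi^*X)]N$, vanishes because $B(X,A_\xi^*Y)=g(A_\xi^*X,A_\xi^*Y)$ is symmetric in $X,Y$; with that observed, the remaining terms assemble exactly into the stated identity.
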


\begin{corolario}\label{cor:lema3.3ciriaco} 
Let $(\bar M_{\bar c}^{n+2},\bar g)$ be a semi-Riemannian manifold of constant curvature $\bar c$ and $M$ a null hypersurface of $\bar M$. For any $X,Y,Z\in\Gamma(TM)$ we have
\begin{enumerate}
\item\label{item:5prop} $(\nabla_X^*A_\xi^*)Y-(\nabla_Y^*A_\xi^*)X=\tau(Y)A_\xi^*X-\tau(X)A_\xi^*Y$;
\item $\nabla_X^*PZ=\nabla_XZ+\eta(Z)A_\xi^*X$.
\end{enumerate}
\end{corolario}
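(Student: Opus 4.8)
The plan is to read off both identities from their ambient ($\nabla$) counterparts in Proposition \ref{lema:lema3.3ciriaco}, items 5 and 6, by comparing the induced connection $\nabla$ on $TM$ with the screen connection $\nabla^*$ on $S(TM)$. The only mechanism converting one into the other is the third Gauss--Weingarten formula in \eqref{eq:gauss1}, namely $\nabla_X PW=\nabla_X^* PW+C(X,PW)\xi$, so the whole argument amounts to tracking the radical ($\xi$-)components that distinguish $\nabla$ from $\nabla^*$ and checking that, under the constant-curvature hypothesis, they cancel.

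For the second identity I would start from item 6 of Proposition \ref{lema:lema3.3ciriaco},
\[
\nabla_X PZ=\nabla_XZ-X(\eta(Z))\xi+\eta(Z)A_\xi^*X+\eta(Z)\tau(X)\xi,
\]
and project onto $S(TM)$. Since $\nabla_X^*PZ$ is by definition the $S(TM)$-component of $\nabla_X PZ$ (the remaining piece being $C(X,PZ)\xi\in\mathrm{Rad}(TM)$), I apply $P$ to the right-hand side. Here I use that $P\xi=0$ and that $A_\xi^*X\in\Gamma(S(TM))$ --- the latter because the fourth equation of \eqref{eq:gauss1} exhibits $-A_\xi^*X$ as precisely the screen part of $\nabla_X\xi$, so $PA_\xi^*X=A_\xi^*X$. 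Every $\xi$-term is then annihilated and collecting the screen-distribution component gives the asserted relation $\nabla_X^*PZ=\nabla_XZ+\eta(Z)A_\xi^*X$.

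For the first identity I would compare the two covariant derivatives of $A_\xi^*$ term by term. Using Definition \ref{defi:derivadas} and the third Gauss--Weingarten formula applied to $A_\xi^*Y\in\Gamma(S(TM))$ (together with $A_\xi^*\xi=0$, which lets me replace $\nabla_X^*Y$ by $\nabla_XY$ inside $A_\xi^*$),
\[
(\nabla_XA_\xi^*)Y-(\nabla_X^*A_\xi^*)Y=\nabla_X(A_\xi^*Y)-\nabla_X^*(A_\xi^*Y)=C(X,A_\xi^*Y)\xi,
\]
and likewise with $X$ and $Y$ interchanged. Subtracting and invoking item 5 of Proposition \ref{lema:lema3.3ciriaco} for the left-hand side yields
\[
\tau(Y)A_\xi^*X-\tau(X)A_\xi^*Y-2d\tau(X,Y)\xi
=(\nabla_X^*A_\xi^*)Y-(\nabla_Y^*A_\xi^*)X+\bigl(C(X,A_\xi^*Y)-C(Y,A_\xi^*X)\bigr)\xi.
\]
The remaining step is to evaluate the $C$-difference. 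From \eqref{eq:2.23Ciriaco} and the constant curvature of $\bar M$ --- where $\bar R(X,Y)\xi=\bar c(\bar g(Y,\xi)X-\bar g(X,\xi)Y)=0$ because $\xi\perp TM$ --- one gets $C(X,A_\xi^*Y)-C(Y,A_\xi^*X)=-2d\tau(X,Y)$. Substituting this, the two $-2d\tau(X,Y)\xi$ contributions cancel and the identity follows.

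The computations themselves are short; the point I expect to require the most care is the bookkeeping of the radical components, i.e. recognizing that the ``defect'' between $\nabla$- and $\nabla^*$-differentiation is always a $C(\cdot,\cdot)\xi$ term and that, precisely in constant curvature, relation \eqref{eq:2.23Ciriaco} forces this defect to reassemble into the $2d\tau$ term already present in Proposition \ref{lema:lema3.3ciriaco}. Outside the constant-curvature setting $\bar g(\bar R(X,Y)\xi,N)$ need not vanish, and the clean screen identities of Corollary \ref{cor:lema3.3ciriaco} would pick up an extra curvature term.
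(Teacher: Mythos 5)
Your argument is correct and follows the route the paper intends (it gives no written proof, but the analogous remark in Lemma~\ref{lema:Codazzi} makes clear the Corollary is obtained by projecting items 5 and 6 of Proposition~\ref{lema:lema3.3ciriaco} onto $S(TM)$ via $P$, exactly as you do for the second identity). For the first identity you do slightly more than necessary: simply applying $P$ to item~5 annihilates the $-2d\tau(X,Y)\xi$ term outright, so your explicit tracking of the radical defect $C(\cdot,\cdot)\xi$ and its cancellation against $2d\tau(X,Y)\xi$ via \eqref{eq:2.23Ciriaco} is a valid (and reassuring) consistency check rather than a needed step.
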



\section{Screen quasi-conformal hypersurfaces}\label{sec:screen}

Screen conformal hypersurfaces were first introduced by Atindogbe and Duggal \cite{MR2039644} in the aim for relating the geometry of the null hypersurface $(M,g)$ and that of a specially chosen screen distribution $S(TM)$. Roughly speaking, geometrical properties of its screen distribution translates to the analog properties on the null screen conformal hypersurface. By definition, a hypersurface is said to be \emph{screen conformal} if the shape operators are linearly related, that is, if $A_N=\varphi A^*_\xi$ for some smooth function $\varphi$. Some of the most remarkable null hypersurfaces, as null cones and null planes in Lorentz-Minkowski spacetime, are screen conformal \cite{MR2358723}. Here we extend the notion of conformality in order to include other relevant examples of null hypersurfaces that, though not screen conformal in general, do posses a rich geometrical structure.

\begin{definicion}\label{def:qconforme}
A null hypersurface $(M,g,S(TM))$ of a semi-Riemannian manifold is {\em locally screen quasi-conformal} if the shape operators $A_N$ and $A_\xi^*$ of $M$ and $S(TM)$ satisfy
\[
A_N=\varphi A_\xi^*+ \psi P,
\]
in $\Gamma(TM)$ for some functions $\varphi$ and $\psi$; recall $P:\Gamma(TM)\to \Gamma(S(TM))$ is the natural projection related to the decomposition (\ref{eq:descomposicion1}). If the above property holds everywhere on $M$, we say that $(M,g,S(TM))$ is {\em globally screen quasi-conformal}, or  {\em screen quasi-conformal} for short. We further refer to $(\varphi ,\psi)$ as the \emph{quasi-conformal pair} associated to $(M,g,S(TM))$.
\end{definicion}

\begin{remark}\label{rem:psinocero}
If $\psi\equiv 0$, we recover the definition of null screen conformal hypersurfaces introduced in \cite{MR2039644}. Thus, throughout this work we will be assuming that $\psi\not\equiv 0$.
\end{remark}

Equivalently,  $(M,g,S(TM))$ is  locally screen quasi-conformal if the local second fundamental forms $B$ and $C$ defined in (\ref{eq:2ffB}) and (\ref{eq:2ffC}) satisfy
\[
C(X,PY)=\varphi B(X,PY)+\psi g(X,PY)
\]
for any $X,Y\in\Gamma(TM)$.

\begin{ejemplo}\label{ejemploGRW}
Let $\bar M$ be a {\em Generalized Robertson-Walker (GRW) spacetime}; that is, a Lorentzian warped product of the form $-I\times_{\varrho} F$, where $F$ is a $(n+1)$-dimensional Riemannian manifold and $\varrho$ is a differentiable, positive function defined in a real interval $I\subset\mathbb R$. 

We recall briefly some results from \cite{MR3508919}. Let $f:F\to\mathbb R$ be a differentiable function. Then the graph of $f$ given as
\[
\{\,(f(p),p)\,\vert\,p\in F\,\}
\]
is a null hypersurface in $-I\times_{\varrho} F$ if and only if $f$ is a smooth function satisfying
\begin{equation*}
\vert \grad f\vert =\varrho\circ f;
\end{equation*}
we write $\varrho\circ f$ simply as $\varrho$. In this context, set
\[
\xi=\frac{1}{\sqrt{2}}\left(1,\frac{\grad f}{\varrho^2}\right).
\]

We denote by $S^*(TM)$ the screen distribution given as the family of tangent bundles of the level hypersurfaces $S_t=M\cap\left(\{t\}\times F\right)$ and
\[
N=\frac{1}{\sqrt{2}}\left(-1,\frac{\grad f}{\varrho^2}\right).
\]

Since $A_\xi^*\xi=0$ and $A_N\xi=0$, we have the following relation between the shape operators (see the proof of Proposition 5.2 in \cite{MR3508919}):
\begin{equation*}
\frac{1}{\sqrt{2}}(A_N-A_\xi^*)=\frac{\varrho'}{\varrho}P.
\end{equation*}
 Therefore, every null hypersurface in $-I\times_\varrho F$ is screen quasi-conformal, but not screen conformal in general.
\end{ejemplo}

We now extend some results well known for screen conformal hypersurfaces to our case. The first result gives us a criterion under which a null hypersurface is locally screen quasi-conformal. The reader may compare this with Proposition 2.2.2 in  \cite[p. 53]{MR2598375}.

\begin{proposicion}
Let $(M,g,S(TM))$ be a null hypersurface of a semi-Riemann\-ian manifold $(\bar M,\bar g)$. Suppose that $S(TM)$ is integrable and that each leaf $M'$ of $S(TM)$ is a codimension $2$ non-degenerate submanifold of $\bar M$ which is $\eta$-totally umbilical for some nowhere vanishing spacelike vector field $\eta$ normal to $M'$. If the screen distribution is parallel along integral curves of the radical distribution, then $M$ is screen quasi-conformal.
\end{proposicion}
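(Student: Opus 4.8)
The plan is to compare the second fundamental form of a leaf of $S(TM)$, regarded as a codimension-two submanifold of $\bar M$, with the local second fundamental forms $B$ and $C$ of $M$ and $S(TM)$, and then to read off the quasi-conformal relation from the umbilicity hypothesis, using the parallelism hypothesis to promote it to an operator identity on all of $TM$. First I would identify the normal bundle of a leaf $M'$. Since $TM'=S(TM)|_{M'}$ and the decomposition
\[
T\bar M|_M=S(TM)\oplus_{\mathrm{orth}}\bigl(\mathrm{Rad}(TM)\oplus\mathrm{tr}(TM)\bigr)
\]
is orthogonal, the normal bundle of $M'$ in $\bar M$ is spanned by $\xi$ and $N$. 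For $X,Y\in\Gamma(S(TM))$ the Gauss--Weingarten formulae (\ref{eq:gauss1}) give $\bar\nabla_XY=\nabla^*_XY+C(X,Y)\xi+B(X,Y)N$, with $\nabla^*_XY\in\Gamma(S(TM))$ tangent to $M'$. Because the decomposition is orthogonal, $\nabla^*$ is precisely the tangential part of $\bar\nabla$, hence the Levi-Civita connection of the induced metric on the non-degenerate leaf $M'$; consequently the second fundamental form of $M'$ in $\bar M$ is $I\!I'(X,Y)=C(X,Y)\xi+B(X,Y)N$.

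Next I would translate the umbilicity hypothesis. Writing the spacelike normal field as $\eta=a\xi+bN$, the relations $\bar g(\xi,\xi)=\bar g(N,N)=0$ and $\bar g(\xi,N)=1$ yield $\bar g(\eta,\eta)=2ab$, so $ab>0$ and in particular $b$ is nowhere zero. The condition that $M'$ be $\eta$-totally umbilical, i.e. $\bar g(I\!I'(X,Y),\eta)=\rho\,g(X,Y)$ for some function $\rho$, then reads $b\,C(X,Y)+a\,B(X,Y)=\rho\,g(X,Y)$, that is
\[
C(X,Y)=\varphi\,B(X,Y)+\psi\,g(X,Y),\qquad \varphi=-\frac{a}{b},\quad\psi=\frac{\rho}{b},
\]
for all $X,Y\in\Gamma(S(TM))$; the functions $\varphi,\psi$ are smooth since $b$ is nowhere vanishing.

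Finally I would extend this relation from $S(TM)$ to all of $TM$. The assumption that $S(TM)$ is parallel along the integral curves of $\mathrm{Rad}(TM)$ means $\nabla_\xi Y\in\Gamma(S(TM))$ for $Y\in\Gamma(S(TM))$; by the third formula of (\ref{eq:gauss1}) the radical component of $\nabla_\xi Y$ is $C(\xi,Y)\xi$, so this hypothesis is equivalent to $C(\xi,Y)=g(A_NX,Y)|_{X=\xi}=0$, i.e. $A_N\xi=0$ (recall $A_N$ takes values in $S(TM)$ by Proposition \ref{prop:propiedades}(1)). Decomposing any $X\in\Gamma(TM)$ as $X=PX+\bar g(X,N)\xi$ and using $A_N\xi=0$, $A_\xi^*\xi=0$, $P\xi=0$ together with $B(\xi,\cdot)=0$ and $g(\xi,\cdot)|_{S(TM)}=0$, the displayed relation upgrades to $C(X,PY)=\varphi\,B(X,PY)+\psi\,g(X,PY)$ for all $X,Y\in\Gamma(TM)$, which is the equivalent form of screen quasi-conformality; equivalently $A_N=\varphi A_\xi^*+\psi P$.

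I expect the step requiring the most care to be the clean separation of the two hypotheses' roles: the umbilicity furnishes the relation only on the screen, while the parallelism along the radical is exactly what forces $A_N\xi=0$ and thereby lets the relation hold as an operator identity on all of $TM$ (note that $A_N=\varphi A_\xi^*+\psi P$ automatically demands $A_N\xi=0$). A secondary point worth checking is the smooth dependence of the pair $(\varphi,\psi)$ from leaf to leaf, which follows from the smoothness of $\eta$ (hence of $a,b$) and of $\rho$, together with $b\neq0$.
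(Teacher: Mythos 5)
Your proposal is correct and follows essentially the same route as the paper: decompose $\eta=a\xi+bN$, use the $\eta$-umbilicity of the leaf to obtain $C=-\tfrac{a}{b}B+\tfrac{\rho}{b}g$ on $S(TM)$ (the paper phrases this dually via $A_\eta=aA_\xi^*+bA_N=\gamma I$), and use the parallelism hypothesis to get $A_N\xi=0$ and promote the relation to all of $TM$. Your observation that spacelikeness forces $\bar g(\eta,\eta)=2ab>0$, hence $b$ is automatically nowhere zero, is a slight tightening of the paper's ``without loss of generality assume $\beta\neq 0$''.
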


\begin{proof}
Let $M'$ be a leaf of $S(TM)$ and write $\eta=\alpha\xi+\beta N$. For $X\in\Gamma(TM')$ we have
\begin{eqnarray*}
\bar\nabla_X\eta & = & X(\alpha)\xi+\alpha\bar\nabla_X\xi+X(\beta)N+\beta\bar\nabla_XN \\
 & = & (-\alpha A_\xi^*X-\beta A_NX)+(X(\alpha)\xi-\alpha\tau(X)\xi+X(\beta)N+\beta\tau(X)N).
\end{eqnarray*}

By equating the parts tangent to $M'$,
\[
A_\eta X=\alpha A_\xi^*X+\beta A_NX.
\]
Since $M'$ is $\eta$-totally umbilical, $A_\eta=\gamma I$ for some function $\gamma$. On the other hand, since $\eta$ is spacelike, either $\alpha\ne 0$ or $\beta\neq 0$. Without loss of generality we assume $\beta\neq 0$ and obtain
\[
A_NX=-\frac{\alpha}{\beta}A_\xi^*X+\frac{\gamma}{\beta}X,\quad X\in\Gamma(TM').
\]

Given that $S(TM)$ is parallel along integral curves of $\mathrm{Rad}(TM)$ implies $g(A_N\xi,X)=-\bar g(\bar\nabla_\xi N,X)=\bar g(N,\bar\nabla_\xi X)=0$; since $g(A_N\xi,N)=0$ as well, then $A_N\xi=0$ and
\[
A_NX=-\frac{\alpha}{\beta}A_\xi^*X+\frac{\gamma}{\beta}PX
\]
holds true everywhere in $\Gamma(TM)$; hence, $M$ is screen quasi-conformal.
\end{proof}

We also have a relation between the behavior of the principal curvatures of $M$ and the fact of $M$ being locally screen quasi-conformal; see Theorem 2.2.4 as its conformal analog in \cite[p. 54]{MR2598375}.

\begin{teorema}\label{teo:curvatures}
Let $(M,g,S(TM))$ be a null hypersurface in a semi-Riemannian manifold $(\bar M,\bar g)$. $(M,g,S(TM))$ is locally screen quasi-conformal in some domain $U$ in $M$ if and only if the following two conditions hold:
\begin{enumerate}
\item The shape operators $A_N$ and $A_\xi^*$ commute on $U$;

\item If $\mu_i$ and $\lambda_i$ are the corresponding principal curvatures of $A_N$ and $A_\xi^*$, then (reordering if necessary) $\xi$ is an eigenvector of both operators with $\mu_0=\lambda_0=0$, while for $i=1,\dots,n$ we have $\mu_i=\varphi\lambda_i+\psi$  for some differentiable functions $\varphi,\psi$ on $U$.
\end{enumerate}
\end{teorema}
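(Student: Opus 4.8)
The plan is to reduce everything to the fact that both shape operators take their values in the screen distribution and annihilate $\xi$, after which the statement becomes a standard simultaneous-diagonalization argument on the Riemannian bundle $S(TM)$. First I would record the two structural facts I will use repeatedly: from the Gauss--Weingarten equations \eqref{eq:gauss1}, the definition \eqref{eq:tau} of $\tau$, and Proposition \ref{prop:propiedades}(1), one checks that $\bar g(A_NX,N)=\bar g(A_NX,\xi)=0$ and $\bar g(A_\xi^*X,N)=\bar g(A_\xi^*X,\xi)=0$ for all $X$, so that both $A_N$ and $A_\xi^*$ map $\Gamma(TM)$ into $\Gamma(S(TM))$; moreover $A_\xi^*\xi=0$ by Proposition \ref{prop:propiedades}(2). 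In particular $PA_\xi^*=A_\xi^*=A_\xi^*P$, since $P$ restricts to the identity on $S(TM)$ and $A_\xi^*\xi=0$.

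For the forward implication I would assume $A_N=\varphi A_\xi^*+\psi P$. Applying this to $\xi$ gives $A_N\xi=\varphi A_\xi^*\xi+\psi P\xi=0$, so $\xi$ is a common eigenvector with $\mu_0=\lambda_0=0$. Commutativity then follows from a one-line computation: using $PA_\xi^*=A_\xi^*=A_\xi^*P$,
\[
A_NA_\xi^*=\varphi(A_\xi^*)^2+\psi A_\xi^*=A_\xi^*A_N.
\]
Finally, since $A_\xi^*$ is $g$-symmetric (Proposition \ref{prop:propiedades}(3)) and $S(TM)$ is spacelike, $A_\xi^*$ diagonalizes in a $g$-orthonormal frame $\{e_1,\dots,e_n\}$ of $S(TM)$ with $A_\xi^*e_i=\lambda_ie_i$; applying $A_N=\varphi A_\xi^*+\psi P$ to $e_i$ yields $A_Ne_i=(\varphi\lambda_i+\psi)e_i$, that is $\mu_i=\varphi\lambda_i+\psi$, which is condition (2).

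For the converse I would run the same diagonalization in reverse. By Proposition \ref{prop:propiedades}(3) the symmetric operator $A_\xi^*$ admits a $g$-orthonormal eigenframe $\{e_1,\dots,e_n\}$ of $S(TM)$; condition (1) guarantees that $A_N$ leaves each eigenspace of $A_\xi^*$ invariant, and together with the hypothesis in (2) that the $\mu_i$ are the principal curvatures of $A_N$ corresponding to the $e_i$, this produces a common eigenframe $\{\xi,e_1,\dots,e_n\}$ of $TM$ with $A_\xi^*\xi=A_N\xi=0$, $A_\xi^*e_i=\lambda_ie_i$ and $A_Ne_i=\mu_ie_i=(\varphi\lambda_i+\psi)e_i$. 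It then suffices to check that $A_N$ and $\varphi A_\xi^*+\psi P$ agree on this basis: on $\xi$ both sides vanish (using $P\xi=0$), and on each $e_i$ both equal $(\varphi\lambda_i+\psi)e_i$. Since the frame spans $\Gamma(TM)$, the operator identity $A_N=\varphi A_\xi^*+\psi P$ holds on $U$, which is the definition of local screen quasi-conformality.

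I expect the only genuinely delicate step to be the simultaneous diagonalization in the converse. The operator $A_N$ need not be $g$-symmetric -- by Proposition \ref{prop:propiedades}(4) this holds precisely when $S(TM)$ is integrable -- so its diagonalizability cannot be taken for granted and must be extracted from the commuting hypothesis (1) together with the assertion in (2) that $A_N$ genuinely possesses principal curvatures matched to those of $A_\xi^*$; the care lies in handling eigenspaces of $A_\xi^*$ of multiplicity greater than one, where the correspondence between the $\mu_i$ and the $\lambda_i$ has to be read off consistently throughout the domain $U$. Once a common eigenframe is secured, the remainder is the routine verification above.
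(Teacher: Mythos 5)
Your proof is correct and follows essentially the same route as the paper's: apply the relation to $\xi$, observe commutativity, and compare the two operators on a common eigenframe. The only real difference is in the converse, where the paper writes each common eigenvector as $E_i=\bar E_i+c_i\xi$ and deduces $\psi c_i=0$ (splitting into the screen conformal case $\psi=0$ and the case $c_i=0$), whereas you fold this into the claim that the common eigenframe can be chosen inside $S(TM)$ -- a claim that is true but deserves the one-line justification the paper supplies; both versions share the same implicit reliance on hypothesis~(2) for the diagonalizability of $A_N$, which you at least flag explicitly.
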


\begin{proof}
Suppose first that $(M,g,S(TM))$ is locally screen quasi-conformal in $U$, so $A_N=\varphi A_\xi^*+\psi P$ for some functions $\varphi,\psi$ defined on $U$. Since it is always true that $A_\xi^*\xi=P\xi=0$, we have that $\xi$ is an eigenvector of $A_N$ with $\mu_0=0$.

It is also clear from the quasi-conformality condition that $A_N$ and $A_\xi^*$ commute; therefore, they are simultaneously diagonalizable and there exists a frame $\{ E_0=\xi,E_1,\dots,E_n\}$ such that each $E_i$ is a field of eigenvectors of $A_N$ and $A_\xi^*$ with eigenvalues $\mu_i$ and $\lambda_i$, respectively. For $i=1,\dots,n$, write $E_i=\bar E_i+c_i\xi$, where $\bar E_i\in\Gamma(S(TM))$. Note that $\bar E_i\ne 0$. Since $A_NE_i=\mu_i E_i$ and $A_\xi^*E_i=\lambda_iE_i$, we have
\begin{equation*}
\mu_iE_i=A_NE_i=\varphi A_\xi^*E_i+\psi P E_i=\varphi \lambda_i E_i+\psi PE_i;
\end{equation*}
by taking the component relative to $S(TM)$, we have $\mu_i=\varphi\lambda_i+\psi$.

Conversely, if $A_N$ and $A_\xi^*$ commute on $U$; then these operators are simultaneously diagonalizable and there exists a frame $\{E_0,E_1,\dots,E_n\}$ with corresponding eigenvalues $\mu_i$ and $\lambda_i$ satisfying $\mu_0=\lambda_0=0$ and $\mu_i=\varphi\lambda_i+\psi$ for $i=1,\dots,n$. By our hypothesis, we may suppose that $E_0=\xi$, so that
\[
A_NE_0=(\varphi A_\xi^*+\psi P)E_0=0.
\]

On the other hand, for $i=1,\dots,n$, write $E_i=\bar E_i+c_i\xi$, where $\bar E_i\in\Gamma(S(TM))$. We have
\[
A_NE_i=\mu_iE_i=(\varphi \lambda_i+\psi) E_i=\varphi A_\xi^*E_i+\psi E_i=\varphi A_\xi^*E_i+\psi(\bar E_i+c_i\xi);
\]
since $A_N$ and $A_\xi^*$ are $S(TM)$-valued, $\psi c_i=0$ and we have two cases: either $\psi=0$ and $(M,g,S(TM))$ is locally screen conformal, so the result follows from \cite{MR2598375}; or $\psi\ne 0$, from which we have $c_i=0$ for all $i$, meaning that $E_i\in\Gamma(S(TM))$ and hence $PE_i=E_i$. In short,
\[
A_N=\varphi A_\xi^*+\psi P
\]
holds for the frame $\{E_0,\dots,E_n\}$ and therefore it holds everywhere on $U$. \end{proof}

The following result shows that the screen quasi-conformal hypersurfaces exhibit a similar geometry to that of their screen distribution. Recall that $M$ is said to be {\em totally umbilical} in $\bar M$ if and only if $B(X,Y)=\beta g(X,Y)$ for any $X,Y\in\Gamma(TM)$; in the case $\beta\equiv 0$, $M$ is {\em totally geodesic}. Also, compare the resut with  Theorem 2 in \cite{MR2039644}.

\begin{teorema}\label{teo:umbilica}
Let $(M,g,S(TM))$ be a locally screen quasi-conformal hypersurface of a semi-Riemannian manifold $(\bar M,\bar g)$. Then the screen distribution $S(TM)$ is integrable. Moreover, $M$ is totally umbilical in $\bar M$ if and only if each leaf $M'$ of $S(TM)$ is a totally umbilical codimension $2$ non-degenerate submanifold of $\bar M$.
\end{teorema}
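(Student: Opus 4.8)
The plan is to prove the two assertions separately, obtaining integrability first and then the umbilicity equivalence by comparing the second fundamental form $B$ of $M$ with the second fundamental form of a leaf. For integrability, I would invoke item (4) of Proposition \ref{prop:propiedades}, which reduces the problem to showing that $A_N$ is $g$-symmetric on $\Gamma(S(TM))$. Since $M$ is screen quasi-conformal, $A_N=\varphi A_\xi^*+\psi P$; here $A_\xi^*$ is symmetric by item (3) of the same proposition, while $P$ acts as the identity on $\Gamma(S(TM))$ and is therefore trivially symmetric there. Hence $A_N$ is a linear combination of operators that are symmetric on $\Gamma(S(TM))$, and so $S(TM)$ is integrable; this guarantees that the leaves $M'$ exist.

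Next I would express the second fundamental form of a leaf $M'$, regarded as a codimension $2$ submanifold of $\bar M$. For $X,Y\in\Gamma(S(TM))=\Gamma(TM')$, combining the Gauss-Weingarten formulae (\ref{eq:gauss1}) with (\ref{eq:2ffB}) and (\ref{eq:2ffC}) gives
\[
\bar\nabla_X Y=\nabla_X^* Y+C(X,Y)\xi+B(X,Y)N,
\]
where $\nabla_X^* Y$ is tangent to $M'$ and $C(X,Y)\xi+B(X,Y)N$ is normal. Thus the second fundamental form of $M'$ is $h'(X,Y)=B(X,Y)N+C(X,Y)\xi$. One should check here that $\nabla^*$ restricts to the Levi-Civita connection of the induced metric on $M'$: its metricity is inherited from $\nabla$ via item (5) of Proposition \ref{prop:propiedades} (the $\eta$-terms vanish on $S(TM)$), and it is torsion-free because $C$ is symmetric on the now-integrable $S(TM)$.

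For the forward implication, assume $M$ is totally umbilical, so $B(X,Y)=\beta g(X,Y)$ for all $X,Y\in\Gamma(TM)$. Restricting to $S(TM)$ and using the quasi-conformal relation $C(X,Y)=\varphi B(X,Y)+\psi g(X,Y)$, I obtain $C(X,Y)=(\varphi\beta+\psi)g(X,Y)$, whence
\[
h'(X,Y)=g(X,Y)\,[\,\beta N+(\varphi\beta+\psi)\xi\,].
\]
The normal factor is independent of $X,Y$, so $M'$ is totally umbilical.

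For the converse, suppose each leaf is totally umbilical, which yields $B(X,Y)=a\,g(X,Y)$ for $X,Y\in\Gamma(S(TM))$. To promote this to all of $\Gamma(TM)$ I would use the decomposition $X=PX+\eta(X)\xi$, together with the facts that $B$ is symmetric with $B(\cdot,\xi)=0$ (since $A_\xi^*\xi=0$ by item (2)) and that $g(\cdot,\xi)=0$ on $S(TM)$ while $g(\xi,\xi)=0$; both $B$ and $g$ then satisfy $B(X,Y)=B(PX,PY)$ and $g(X,Y)=g(PX,PY)$, so $B(X,Y)=a\,g(X,Y)$ holds for all $X,Y\in\Gamma(TM)$ and $M$ is totally umbilical. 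I expect the main obstacle to be the careful identification of $h'$ — in particular verifying that $\nabla^*$ is genuinely the intrinsic Levi-Civita connection of the leaf — and, in the converse, controlling the radical direction so that umbilicity established only on the screen distribution propagates to the degenerate directions of $TM$.
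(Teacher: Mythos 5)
Your proposal is correct and follows essentially the same route as the paper: integrability via the symmetry of $A_N=\varphi A_\xi^*+\psi P$ and Proposition \ref{prop:propiedades}(4), the identification $h'(X,Y)=B(X,Y)N+C(X,Y)\xi$ on a leaf, and the comparison of normal components in both directions (with the same final step $B(X,\xi)=0=g(X,\xi)$ to propagate umbilicity from $S(TM)$ to all of $TM$). The only difference is your extra (welcome but routine) check that $\nabla^*$ is the leaf's Levi-Civita connection, which the paper takes for granted.
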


\begin{proof}
Suppose $A_N=\varphi A_\xi^*+ \psi P$; since $A_\xi^*$ and $P$ are symmetric, the same happens with $A_N$. Notice that item 2.1 in Proposition \ref{prop:propiedades} implies that $S(TM)$ is integrable.

Now, let $M'$ be a leaf of $S(TM)$. If $\nabla^*$ denotes its Levi-Civita connection and $h'$ its second fundamental form, then
\[
\bar \nabla_XY=\nabla_X^*Y+h'(X,Y),
\]
for each $X,Y\in\Gamma(TM')$; therefore,
\begin{eqnarray*}
h'(X,Y) & = & C(X,Y)\xi+B(X,Y)N = g(A_NX,Y)\xi+g(A_\xi^*X,Y)N \\
            & = & g((\varphi A_\xi^*+\psi)X,Y)\xi+g(A_\xi^*X,Y)N \\
            & = & B(X,Y)(\varphi\xi+N)+\psi g(X,Y)\xi;
\end{eqnarray*}

If $M$ is totally umbilical in $\bar M$, say $B(X,Y)=\beta g(X,Y)$, we have
\[
h'(X,Y) = g(X,Y)((\beta\varphi+\psi)\xi+\beta N)
\]
and $M'$ is totally umbilical. Conversely, if $M'$ is totally umbilical, then $h'(X,Y)=g(X,Y)H$, where $H$ is a vector field normal to $M'$ which can be written as $H=\alpha\xi+\beta N$. From the above relation,
\[
g(X,Y)(\alpha\xi+\beta N)= B(X,Y)(\varphi\xi+N)+\psi g(X,Y)\xi;
\]
by taking the component relative to $N$, we have $B(X,Y)=\beta g(X,Y)$ for $X,Y\in\Gamma(S(TM))$. Further $B(X,\xi)=\beta g(X,\xi)=0$ for any $X\in\Gamma(S(TM))$, so  $B(X,Y)=\beta g(X,Y)$ for $X,Y\in\Gamma(TM)$ and $M$ is totally umbilical in $\bar M$.
\end{proof}


\section{Cartan identities for null screen isoparametric hypersurfaces}\label{sec:cartan}

As a first application of the notion of quasi-conformality we proceed to elaborate on the notion of null screen isoparametric hypersurfaces and establish algebraic identities of Cartan type. The study of isoparametric hypersurfaces in the Riemannian setting goes back to the early works of Cartan \cite{MR1553310,MR0000169} and is to this day a vivid area of research (refer to \cite{MR3408101} and references therein for an updated account). These hypersurfaces can be defined as having constant principal curvatures. Thus, following \cite{MR3270005} we state the next definition:

Let $(M,g,S(TM))$ be a null hypersurface of a Lorentzian manifold of constant curvature $(\bar M_{\bar c}^{n+2},\bar g)$. Since the shape operator $A_\xi^*$ is diagonalizable and $A_\xi^*\xi=0$, we have a frame field $\{\xi,E_1,\dots,E_n\}$ of eigenvectors of $A_\xi^*$ such that $\{E_1,\dots,E_n\}$ is an orthonormal frame field of $S(TM)$.  If $A_\xi^*E_i=\lambda_i E_i$, $i=1,\dots,n$, we call $\lambda_i$ the {\em screen principal curvatures} of $(M,g,S(TM))$.

\begin{definicion} \label{def:taus}
Let $(\bar M_{\bar c}^{n+2},\bar g)$ be a Lorentzian manifold of constant curvature $\bar c$ and $(M,g,S(TM))$ a null hypersurface of $\bar M$. $(M,g,S(TM))$ is a {\em null screen isoparametric} hypersurface if all the screen principal curvatures are constant along $S(TM)$. For each screen principal curvature $\lambda$, we define the distribution
\[
T_\lambda =\{\ X\in\Gamma(S(TM))\ \vert\ A_\xi^*X=\lambda X\ \}.
\]
\end{definicion}

The following result is essentially Lemma 3.3 in \cite{MR3270005}, with some additions.

\begin{lema}\label{lema:Codazzi} Let $(\bar M_{\bar c}^{n+2},\bar g)$ be a Lorentzian manifold of constant curvature $\bar c$ and $(M,g,S(TM))$ a null hypersurface of $\bar M$ such that the $1$-form $\tau$ given in (\ref{eq:tau}) vanishes along $S(TM)$. Then
\begin{enumerate}
\item\label{item:1lema} For all $X,Y\in\Gamma(S(TM))$,
\begin{enumerate}
\item $(\nabla_X A_\xi^*)Y=(\nabla_Y A_\xi^*)X;$
\item $(\nabla_X^* A_\xi^*)Y=(\nabla_Y^* A_\xi^*)X;$
\item $(\nabla_X A_\xi^*)\xi=(\nabla_\xi A_\xi^*)X+\tau(\xi)A_\xi^*X-C(\xi,A_\xi^*X)\xi ;$ and
\item $(\nabla_X^* A_\xi^*)\xi=(\nabla_\xi^* A_\xi^*)X+\tau(\xi)A_\xi^*X.$
\end{enumerate}
\smallskip

\item\label{item:2lema} $\nabla_XA_\xi^*$ and $\nabla_X^*A_\xi^*$ are symmetric relative to $g$; i.e., for any $X,Y,Z\in\Gamma(TM)$ we have
\begin{enumerate}
\item $g((\nabla_XA_\xi^*)Y,Z) = g(Y,(\nabla_XA_\xi^*)Z)$; and 
\item $g((\nabla_X^*A_\xi^*)Y,Z) = g(Y,(\nabla_X^*A_\xi^*)Z).$
\end{enumerate}
\smallskip

\item\label{item:3lema} For any $X,Y,Z\in\Gamma(S(TM))$,
\begin{enumerate}
\item $g((\nabla_XA_\xi^*)Y,Z)=g((\nabla_ZA_\xi^*)Y,X)$;
\item $g((\nabla_X^*A_\xi^*)Y,Z)=g((\nabla_Z^*A_\xi^*)Y,X)$;
\item $g((\nabla_XA_\xi^*)\xi,Z)=g((\nabla_ZA_\xi^*)\xi,X);$ and
\item $g((\nabla_\xi A_\xi^*)Y,Z)=-\tau(\xi)g(A_\xi^*Y,Z)$.
\end{enumerate}
\smallskip

\item \label{item:4lema} For $X\in\Gamma(TM)$, $Y\in T_\lambda$ and $Z\in T_\mu$, $\lambda\ne\mu$,
\begin{enumerate}
\item $g((\nabla_XA_\xi^*)Y,Z)=(\lambda-\mu)g(\nabla_XY,Z)$.
\item $g((\nabla_X^*A_\xi^*)Y,Z)=(\lambda-\mu)g(\nabla_X^*Y,Z)$.
\end{enumerate}
\end{enumerate}
\end{lema}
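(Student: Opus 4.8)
The plan is to derive everything from the structural identities already established, principally Proposition~\ref{lema:lema3.3ciriaco} item \ref{item:5prop}, Corollary~\ref{cor:lema3.3ciriaco}, the definitions of the covariant derivatives in Definition~\ref{defi:derivadas}, and the symmetry and parallel-transport relations in Proposition~\ref{prop:propiedades}. The standing hypothesis is that $\tau$ vanishes along $S(TM)$, meaning $\tau(X)=0$ for $X\in\Gamma(S(TM))$, while $\tau(\xi)$ need not vanish; I would keep careful track of where this distinction matters, since the asymmetry between items (a)--(b) and (c)--(d) throughout the lemma stems precisely from the surviving $\tau(\xi)$ terms.

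For item \ref{item:1lema}, parts (a) and (b) follow immediately by specializing the Codazzi-type identities (item \ref{item:5prop} of Proposition~\ref{lema:lema3.3ciriaco} and item \ref{item:5prop} of Corollary~\ref{cor:lema3.3ciriaco}) to $X,Y\in\Gamma(S(TM))$: the right-hand sides contain only $\tau(X)A_\xi^*Y-\tau(Y)A_\xi^*X$ and a $d\tau(X,Y)\xi$ term, all of which vanish when $\tau\equiv0$ on $S(TM)$, noting that $2d\tau(X,Y)=X(\tau(Y))-Y(\tau(X))-\tau([X,Y])=-\tau([X,Y])$, and $[X,Y]\in\Gamma(S(TM))$ by integrability (Theorem~\ref{teo:umbilica} is not needed, but $S(TM)$ integrability follows since $\tau$ vanishing forces $A_N$ symmetry via the structure equations). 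Parts (c) and (d) instead take $Y=\xi$ in the same identities; here $\tau(X)=0$ but $\tau(\xi)\neq0$ survives, producing the $\tau(\xi)A_\xi^*X$ term, and for (c) the extra $C(\xi,A_\xi^*X)\xi$ term arises from the difference between $\nabla^*$ and $\nabla$ acting through the $h^*$-component (cf. the relation $\nabla_X PZ$ versus $\nabla_X^*PZ$ in Corollary~\ref{cor:lema3.3ciriaco}).

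For item \ref{item:2lema}, symmetry of $\nabla_X A_\xi^*$ and $\nabla_X^* A_\xi^*$, I would expand $g((\nabla_X A_\xi^*)Y,Z)$ using the product rule on $g(A_\xi^* Y,Z)$, invoking the non-metricity formula (item \ref{prop:propiedades:item5} of Proposition~\ref{prop:propiedades}) that expresses $(\nabla_X g)(Y,Z)$ in terms of $B$ and $\eta$, together with the self-adjointness of $A_\xi^*$ (item 3 of Proposition~\ref{prop:propiedades}). The key computation is that the non-metric correction terms $B(X,A_\xi^*Y)\eta(Z)+B(X,Z)\eta(A_\xi^*Y)$ either cancel or are symmetric in $Y,Z$ after using $\eta(A_\xi^* Y)=0$ (since $A_\xi^*$ is $S(TM)$-valued) and $B(X,A_\xi^*Y)=g(A_\xi^* X,A_\xi^*Y)$, which is manifestly symmetric. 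Item \ref{item:3lema}(a)--(b) then follow by combining the symmetry just proved with the Codazzi symmetry from item \ref{item:1lema}: writing $g((\nabla_X A_\xi^*)Y,Z)$ and permuting using $(\nabla_X A_\xi^*)Y=(\nabla_Y A_\xi^*)X$ alongside self-adjointness yields the cyclic exchange of $X$ and $Z$; part (c) is the $Y=\xi$ analogue, and part (d) follows by plugging $X=\xi$ into item \ref{item:3lema}(d)'s defining expression and applying item \ref{item:1lema}(d) or direct evaluation, isolating the $-\tau(\xi)g(A_\xi^*Y,Z)$ residue.

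Finally, item \ref{item:4lema} is the eigendistribution computation: for $Y\in T_\lambda$ and $Z\in T_\mu$ I expand $(\nabla_X A_\xi^*)Y=\nabla_X(A_\xi^* Y)-A_\xi^*(\nabla_X Y)=\nabla_X(\lambda Y)-A_\xi^*(\nabla_X Y)$, take the $g$-inner product with $Z$, and use that $\lambda$ is constant along $S(TM)$ (the isoparametric hypothesis is not even needed for this algebraic step if $X\in\Gamma(S(TM))$, but for general $X\in\Gamma(TM)$ one uses self-adjointness $g(A_\xi^*\nabla_X Y,Z)=g(\nabla_X Y,A_\xi^* Z)=\mu\, g(\nabla_X Y,Z)$), obtaining $g((\nabla_X A_\xi^*)Y,Z)=(\lambda-\mu)g(\nabla_X Y,Z)$; the $\nabla^*$ version is identical using $\nabla_X^* Y$ in place of $\nabla_X Y$ together with Corollary~\ref{cor:lema3.3ciriaco}(2) to reconcile the two connections on $S(TM)$. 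The main obstacle I anticipate is bookkeeping in item \ref{item:2lema}: the connection $\nabla$ is not metric, so every time $g$ is differentiated the correction term from item \ref{prop:propiedades:item5} must be reinserted, and it is easy to drop an $\eta$- or $B$-term; verifying that these corrections are genuinely symmetric in the relevant arguments (rather than merely assuming a metric connection) is the one place where care is essential.
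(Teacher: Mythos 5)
Your overall strategy --- specializing the Codazzi identity (item 5 of Proposition \ref{lema:lema3.3ciriaco}) and its screen projection, then chaining it with the symmetry of $\nabla_X A_\xi^*$ to obtain the cyclic identities of item \ref{item:3lema} --- is the same one the paper follows (the paper simply cites \cite{MR3270005} for all sub-items labelled (a) and obtains the sub-items (b) by applying the projection $P$). However, there is a concrete gap in your treatment of item \ref{item:1lema}(c). Setting $Y=\xi$ in item 5 of Proposition \ref{lema:lema3.3ciriaco} gives
\[
(\nabla_XA_\xi^*)\xi-(\nabla_\xi A_\xi^*)X=\tau(\xi)A_\xi^*X-2d\tau(X,\xi)\,\xi ,
\]
and the term $-2d\tau(X,\xi)\,\xi$ does \emph{not} vanish here and is never evaluated in your argument. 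You instead attribute the summand $-C(\xi,A_\xi^*X)\xi$ to ``the difference between $\nabla^*$ and $\nabla$'', but item \ref{item:1lema}(c) involves only the connection $\nabla$, so no such discrepancy can be its source. The paper produces that summand by computing $2d\tau(X,\xi)$ from the curvature identity (\ref{eq:2.23Ciriaco}): since $\bar M$ has constant curvature, $\bar g(\bar R(X,\xi)\xi,N)=0$, whence $2d\tau(X,\xi)=C(\xi,A_\xi^*X)-C(X,A_\xi^*\xi)=C(\xi,A_\xi^*X)$. This is precisely where constant curvature enters item \ref{item:1lema}(c); without this step your computation cannot yield the stated formula, and items \ref{item:1lema}(d) and \ref{item:3lema}(c)--(d), which the paper derives from \ref{item:1lema}(c), inherit the gap.

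Two smaller points. In item \ref{item:2lema} the cancellation you claim for the non-metricity corrections only works for $Y,Z\in\Gamma(S(TM))$: carrying out your expansion leaves the residue $g(A_\xi^*X,A_\xi^*Z)\eta(Y)-g(A_\xi^*X,A_\xi^*Y)\eta(Z)$, which is antisymmetric under $Y\leftrightarrow Z$ rather than zero (test it with $Z=\xi$), so the terms are ``symmetric as a pair'' but their difference does not vanish for arbitrary sections of $TM$; your argument therefore proves the symmetry only on screen sections, and the radical directions would need a separate discussion. Finally, in item \ref{item:1lema}(a) you still must show $2d\tau(X,Y)=-\tau([X,Y])=-\tau(\xi)\,\eta([X,Y])$ vanishes; your assertion that ``$\tau$ vanishing forces $A_N$ symmetry via the structure equations'' is exactly the integrability of $S(TM)$ that is at stake and is stated without proof, whereas the paper sidesteps this by citing \cite{MR3270005} for that sub-item.
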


\begin{proof} The proofs of items (a) were given in \cite{MR3270005} and we will omit them. We note that item \ref{item:2lema}(a) was stated in the cited reference only for $Y,Z\in\Gamma(S(TM))$, although the proof given works for any $Y,Z\in\Gamma(TM)$. Items (b) follow from items (a) by taking the projection $P:\Gamma(TM)\to\Gamma(S(TM))$.

To prove item \ref{item:1lema}(c), we use Proposition \ref{lema:lema3.3ciriaco}:
\begin{eqnarray*}
(\nabla_XA_\xi^*)\xi-(\nabla_\xi A_\xi^*)X & = & \tau(\xi)A_\xi^*X-\tau(X)A_\xi^*\xi-2d\tau(X,\xi)\xi \\
& = & \tau(\xi)A_\xi^*X-2d\tau(X,\xi)\xi.
\end{eqnarray*}

Since $\bar M$ has constant curvature, $\bar R(X,\xi)\xi=0$; by (\ref{eq:2.23Ciriaco}) we have
\[
0=\bar g(\bar R(X,\xi)\xi,N)=C(\xi,A_\xi^*X)-C(X,A_\xi^*\xi)-2d\tau(X,\xi),
\]
and by substituting into the above we obtain item \ref{item:1lema}(c). By projecting the expression in item \ref{item:1lema}(c) we obtain that of item \ref{item:1lema}(d).

To prove item \ref{item:3lema}(c), we have
\begin{eqnarray*}
g((\nabla_XA_\xi^*)\xi,Z) & = & g((\nabla_\xi A_\xi^*)X+\tau(\xi)A_\xi^*X-C(\xi,A_\xi^*X)\xi,Z) \\
& = & g((\nabla_\xi A_\xi^*)Z,X)+\tau(\xi)g(A_\xi^*Z,X) \\
& = & g((\nabla_\xi A_\xi^*)Z+\tau(\xi)A_\xi^*Z-C(\xi,A_\xi^*Z)\xi,X) \\
& = & g((\nabla_ZA_\xi^*)\xi,X).
\end{eqnarray*}

The proof of item \ref{item:3lema}(d) is analogous.
\end{proof}

The following is a refinement of Lemma 3.4 in \cite{MR2039644}.

\begin{lema}\label{lema:distribuciones} Let $(\bar M_{\bar c}^{n+2},\bar g)$ be a Lorentzian manifold of constant curvature $\bar c$ and $(M,g,S(TM))$ a null hypersurface of $\bar M$ such that the $1$-form $\tau$ given in (\ref{eq:tau}) vanishes along $S(TM)$. Let $\lambda,\mu$ be distinct screen principal curvatures.
\begin{enumerate}
\item If $X,Y\in T_\lambda$, then $\nabla_X^*Y\in T_\lambda$.
\item If $X\in T_\lambda$ and $Y\in T_\mu$ then $\nabla_X^*Y\perp T_\lambda$.
\end{enumerate}
\end{lema}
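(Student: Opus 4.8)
The plan is to encode both statements in a single trilinear form on the screen bundle and to exploit its symmetries. For $X,Y,Z\in\Gamma(S(TM))$ set
\[
T(X,Y,Z)=g\big((\nabla_X^*A_\xi^*)Y,Z\big).
\]
I would first show that $T$ is \emph{totally symmetric} in its three arguments. Item \ref{item:2lema}(b) of Lemma \ref{lema:Codazzi} gives invariance under the exchange of the last two slots, and item \ref{item:3lema}(b) gives invariance under the exchange of the first and third slots; since the transpositions $(2\,3)$ and $(1\,3)$ generate the symmetric group $S_3$, the form $T$ is invariant under every permutation of $X,Y,Z$.

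The second ingredient is the value of $T$ on eigenvectors. Taking $Y\in T_b$, $Z\in T_c$ and $X\in\Gamma(S(TM))$ arbitrary, and expanding $(\nabla_X^*A_\xi^*)Y=\nabla_X^*(bY)-A_\xi^*(\nabla_X^*Y)$, the constancy of the screen principal curvatures along $S(TM)$ (so that $X(b)=0$) together with the $g$-symmetry of $A_\xi^*$ yields
\[
T(X,Y,Z)=(b-c)\,g(\nabla_X^*Y,Z).
\]
In particular $T(X,Y,Z)=0$ whenever $Y$ and $Z$ lie in a common eigenspace. This vanishing is the crux of the argument and is precisely where the isoparametric hypothesis is used: the term $X(b)\,g(Y,Z)$ coming from differentiating the eigenvalue is the only possible obstruction, and it is annihilated by constancy of $b$ along $S(TM)$. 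When $Y$ and $Z$ lie in distinct eigenspaces that same term already vanishes by orthogonality, which recovers item \ref{item:4lema}(b) without any use of constancy.

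Both conclusions then follow by relabelling. For part (1), with $X,Y\in T_\lambda$ and an arbitrary $Z\in T_\mu$, $\mu\neq\lambda$, the displayed identity gives $T(X,Y,Z)=(\lambda-\mu)g(\nabla_X^*Y,Z)$, while total symmetry gives $T(X,Y,Z)=T(Z,Y,X)$, whose last two arguments $Y,X$ both lie in $T_\lambda$; hence $T(Z,Y,X)=0$ and $g(\nabla_X^*Y,Z)=0$. As $S(TM)=\bigoplus_\mu T_\mu$ is an orthogonal decomposition, this forces $\nabla_X^*Y\in T_\lambda$. For part (2), with $X\in T_\lambda$, $Y\in T_\mu$, $\lambda\neq\mu$, and arbitrary $Z\in T_\lambda$, the identity gives $T(X,Y,Z)=(\mu-\lambda)g(\nabla_X^*Y,Z)$, and swapping the first two arguments produces $T(Y,X,Z)$, whose last two arguments $X,Z$ both lie in $T_\lambda$, so it vanishes; thus $g(\nabla_X^*Y,Z)=0$ for all $Z\in T_\lambda$, that is, $\nabla_X^*Y\perp T_\lambda$.

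The step I expect to require the most care is making the total symmetry of $T$ and its vanishing on a repeated eigenspace fully explicit, since this is the only place where constancy of the screen principal curvatures genuinely enters and where one must verify that the derivative-of-eigenvalue term cannot survive. Once these are in place, no further geometric input (in particular, no integrability of the screen distribution) is needed.
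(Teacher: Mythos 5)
Your argument is correct and rests on exactly the same ingredients as the paper's: the symmetry identities \ref{item:2lema}(b) and \ref{item:3lema}(b) of Lemma \ref{lema:Codazzi} plus the expansion of $(\nabla_X^*A_\xi^*)Y$ on eigenvectors. Packaging them as total symmetry of the trilinear form $T$ is a tidier organization that delivers both items in one stroke --- the paper only writes out item (1), as a direct chain of equalities showing $A_\xi^*(\nabla_X Y)=\lambda\nabla_X^*Y$, and leaves item (2) unproved --- but it is not a genuinely different route. One point you raise deserves to be recorded: the vanishing of $T$ on a repeated eigenspace does require $X(b)=0$, i.e.\ constancy of the screen principal curvatures along $S(TM)$. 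That isoparametric hypothesis is absent from the lemma's literal statement, yet the paper's own computation uses it silently, discarding the terms $X(\lambda)Y$ and $Z(\lambda)X$ when expanding $\nabla^*(A_\xi^*\,\cdot)$; without it one is left with the obstruction $X(\lambda)g(Y,Z)-Z(\lambda)g(X,Y)$ and the conclusion can fail. Since the lemma is only ever invoked for null screen isoparametric hypersurfaces (or after Theorem \ref{teo:clasif01} has established isoparametricity), no downstream result is affected, but your explicit flagging of where constancy enters is the more careful reading.
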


\begin{proof} We just observe that the proof of the first item given in \cite{MR2039644} can be shortened by using the fact that $g$ restricted to $S(TM)$ is non-degenerate. If $Z\in\Gamma(S(TM))$, then
\begin{eqnarray*}
g(A_\xi^*(\nabla_XY),Z) & = & g(\nabla_X^*(A_\xi^*Y)-(\nabla_X^*A_\xi^*)Y,Z) \\
 & = & g(\lambda\nabla_X^*Y,Z)-g((\nabla_X^*A_\xi^*)Y,Z) \\
 & = & g(\lambda\nabla_X^*Y,Z)-g(Y,(\nabla_Z^*A_\xi^*)X) \\
 & = & g(\lambda\nabla_X^*Y,Z)-g(Y,\nabla_Z^*(A_\xi^*X)-A_\xi(\nabla_Z^*X)) \\
 & = & g(\lambda\nabla_X^*Y,Z)-\lambda g(Y,\nabla_Z^*X)-g(A_\xi Y,\nabla_Z^*X) \\
 & = & g(\lambda\nabla_X^*Y,Z),
\end{eqnarray*}
implying directly that $A_\xi^*(\nabla_XY)=\lambda\nabla_X^*Y$.
\end{proof}

The following result extends Cartan's identities to null screen isoparametric hypersurfaces of Lorentzian space forms.

\begin{teorema}\label{teo:Cartan}
Let $(M,g,S(TM))$ be a null screen isoparametric hypersurface of a Lorentzian manifold $(\bar M_{\bar c}^{n+2},\bar g)$ of constant curvature $\bar{c}$ such that $\tau(X)=0$ for all $X\in S(TM)$. Let $X$ be a unit eigenvector of $A_\xi^*$ at a point $p$ and $\lambda$ the associated screen principal curvature. For any frame field $\{\xi,E_1,\dots,E_n\}$ of eigenvectors of $A_\xi^*$ such that $\{E_1,\dots,E_n\}$ is an orthonormal frame field of $S(TM)$ satisfying $A_\xi^* E_j=\lambda_j E_j $, we have
\begin{equation}\label{eq:Cartan}
 \sum_{\lambda_j \neq \lambda}\frac{\bar{c}+\lambda g(A_N E_j, E_j) + \lambda_j g(A_N X, X)}{\lambda-\lambda_j}=0.
\end{equation}
Moreover, if $(M,g,S(TM))$ is screen quasi-conformal near $p$ with quasi-conformal pair  $(\varphi$, $\psi )$, and $l>1$ distinct screen principal curvatures $\lambda_1, \ldots, \lambda_l$ with multiplicities $m_1, \ldots, m_l$, then for each $i$, $1\le i\le l$,
\begin{equation}\label{eq:Cartan conforme}
 \sum_{j \neq i} m_j \frac{\bar{c} + 2 \varphi \lambda_i \lambda_j + \psi(\lambda_i+\lambda_j)}{\lambda_i - \lambda_j}=0.
\end{equation}
\end{teorema}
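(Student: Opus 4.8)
The plan is to recognize the numerator of (\ref{eq:Cartan}) as an intrinsic sectional curvature of the screen distribution, reducing the whole statement to Cartan's classical identity for the Codazzi operator $A_\xi^*$ on a leaf of $S(TM)$. First I would fix the point $p$, complete the unit eigenvector to the given frame by writing $X=E_\alpha$ with $A_\xi^*E_\alpha=\lambda E_\alpha$, and record that in this orthonormal eigenframe $B(E_a,E_b)=g(A_\xi^*E_a,E_b)=\lambda_a\delta_{ab}$ while $C(E_a,E_b)=g(A_NE_a,E_b)$. Since eigenvectors of the symmetric operator $A_\xi^*$ belonging to $\lambda\neq\lambda_j$ are $g$-orthogonal, all mixed terms of the form $g(X,E_j)$, $B(X,E_j)$ and $B(E_j,X)$ vanish.

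With this in hand the reduction is short. Applying the constant-curvature formula of Proposition \ref{lema:lema3.3ciriaco}(\ref{lema:lema3.3ciriaco:item1}) with $Y=Z=E_j$ gives $R(X,E_j)E_j=\bar cX+\lambda_jA_NX$, hence $g(R(X,E_j)E_j,X)=\bar c+\lambda_j\,g(A_NX,X)$; and the Gauss equation (\ref{eq:2.22Ciriaco1}) relating $R$ and $R^*$ collapses, after the vanishing of the cross terms, to $g(R^*(X,E_j)E_j,X)=g(R(X,E_j)E_j,X)+\lambda\,g(A_NE_j,E_j)$. Adding the two identities yields
\[
g(R^*(X,E_j)E_j,X)=\bar c+\lambda\,g(A_NE_j,E_j)+\lambda_j\,g(A_NX,X),
\]
which is precisely the numerator in (\ref{eq:Cartan}). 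Thus (\ref{eq:Cartan}) is equivalent to the purely intrinsic statement
\[
\sum_{\lambda_j\neq\lambda}\frac{g(R^*(X,E_j)E_j,X)}{\lambda-\lambda_j}=0
\]
on the (Riemannian) leaf of $S(TM)$ carrying the metric connection $\nabla^*$.

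The heart of the argument, and the step I expect to be the main obstacle, is establishing this intrinsic identity. It is the screen analogue of Cartan's classical formula, and I would prove it exactly as in the Riemannian case, treating $A_\xi^*$ as a Codazzi operator for $\nabla^*$ with constant eigenvalues (the isoparametric hypothesis). The two ingredients are already at hand: Lemma \ref{lema:Codazzi} guarantees, when $\tau$ vanishes on $S(TM)$, that the cubic form $T(U,V,W)=g((\nabla^*_UA_\xi^*)V,W)$ is totally symmetric, so that the connection coefficients $\Gamma^c_{ab}=g(\nabla^*_{E_a}E_b,E_c)$ satisfy the relation that $(\lambda_b-\lambda_c)\Gamma^c_{ab}$ is symmetric in $a,b,c$; and Lemma \ref{lema:distribuciones} records how $\nabla^*$ moves the eigendistributions $T_\lambda$. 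Expanding $g(R^*(X,E_j)E_j,X)$ through the structure equations and using these symmetry relations to rewrite the first-derivative terms of the $\Gamma$'s, one checks that the weighted sum over $j$ cancels. I emphasise that the constant-curvature hypothesis on $\bar M$ enters only to \emph{evaluate} the numerator as above; the vanishing of the weighted sum is a consequence of the Codazzi symmetry of $A_\xi^*$ alone, and controlling those derivative-of-connection terms is the delicate point.

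For the quasi-conformal refinement (\ref{eq:Cartan conforme}) the remaining work is routine. Since $A_N=\varphi A_\xi^*+\psi P$, every $E_j\in S(TM)$ is an eigenvector of $A_N$ as well, with $g(A_NE_j,E_j)=\varphi\lambda_j+\psi$ and $g(A_NX,X)=\varphi\lambda+\psi$. Substituting these into the numerator collapses it to $\bar c+\lambda(\varphi\lambda_j+\psi)+\lambda_j(\varphi\lambda+\psi)=\bar c+2\varphi\lambda\lambda_j+\psi(\lambda+\lambda_j)$, which now depends only on the eigenvalues. Finally I would group the frame vectors $E_j$ by their distinct screen principal curvatures: fixing $\lambda=\lambda_i$, each distinct value $\lambda_k\neq\lambda_i$ is shared by exactly $m_k$ vectors contributing identical summands, so (\ref{eq:Cartan}) becomes $\sum_{k\neq i}m_k(\bar c+2\varphi\lambda_i\lambda_k+\psi(\lambda_i+\lambda_k))/(\lambda_i-\lambda_k)=0$, which is (\ref{eq:Cartan conforme}).
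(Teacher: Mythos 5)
Your reduction of the numerator is correct and is in fact a tidier route than the paper takes: with $X\in T_\lambda$, $E_j\in T_{\lambda_j}$, $\lambda_j\neq\lambda$, the constant-curvature formula gives $g(R(X,E_j)E_j,X)=\bar c+\lambda_j g(A_NX,X)$, the Gauss equation (\ref{eq:2.22Ciriaco1}) gives $g(R^*(X,E_j)E_j,X)=g(R(X,E_j)E_j,X)+\lambda g(A_NE_j,E_j)$ after the cross terms $B(E_j,X)$ vanish, and so the numerator of (\ref{eq:Cartan}) is exactly $g(R^*(X,E_j)E_j,X)$. Working with the metric connection $\nabla^*$ on a leaf also spares you the $(\nabla_Xg)$ correction terms that the paper has to track in its Step III (the paper instead shows the numerator equals $2g(\nabla_XE_j,\nabla_{E_j}X)$ using the non-metric connection $\nabla$ and Proposition \ref{prop:propiedades}(\ref{prop:propiedades:item5})). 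Your quasi-conformal step and the grouping by multiplicities are also correct and match the paper.

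The gap is that the entire technical content of the theorem is compressed into ``one checks that the weighted sum over $j$ cancels.'' That check is Steps I and III--VI of the paper's proof and it is not a formality. Concretely, what has to be established is: (i) $g(R^*(X,Y)Y,X)=2g(\nabla^*_XY,\nabla^*_YX)$ for unit $X\in T_\lambda$, $Y\in T_\mu$, $\mu\neq\lambda$, which requires combining the Codazzi symmetry of $A^*_\xi$ with the torsion-freeness of the connection to evaluate $g((\nabla^*_{[X,Y]}A^*_\xi)X,Y)=(\lambda-\mu)g(\nabla^*_XY,\nabla^*_YX)$ and the terms $g(\nabla^*_X\nabla^*_YY,X)$, $g(\nabla^*_Y\nabla^*_XY,X)$ one at a time (using Lemma \ref{lema:distribuciones} to kill $g(\nabla^*_YY,\nabla^*_XX)$); (ii) the expansion $g(\nabla^*_XY,\nabla^*_YX)=\sum_{\lambda_i\neq\lambda,\mu}\bigl(g((\nabla^*_{E_i}A^*_\xi)X,Y)\bigr)^2/\bigl((\lambda-\lambda_i)(\mu-\lambda_i)\bigr)$, which uses the total symmetry of the cubic form through the identity $g((\nabla^*_ZA^*_\xi)X,Y)=(\lambda-\nu)g(\nabla^*_YX,Z)=(\mu-\nu)g(\nabla^*_XY,Z)$; and (iii) the observation that after setting $Y=E_j$, dividing by $\lambda-\lambda_j$ and summing, the resulting double sum has a numerator symmetric in $(i,j)$ over a denominator $(\lambda-\lambda_i)(\lambda-\lambda_j)(\lambda_j-\lambda_i)$ that is skew-symmetric, hence vanishes. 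None of this is in your write-up, and the phrase ``rewrite the first-derivative terms of the $\Gamma$'s'' does not identify the skew-symmetry mechanism that actually produces the cancellation. You also omit the case of exactly two distinct screen principal curvatures, where the third eigenvalue needed in (ii) does not exist and one must argue directly (via Lemma \ref{lema:distribuciones}, item 2) that $g(\nabla^*_XY,\nabla^*_YX)=0$. Your framing of the identity as a purely intrinsic statement about a Codazzi operator with constant eigenvalues is a good idea and would work, but as written the proof of that statement is missing rather than merely different.
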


\begin{proof} Let us first assume that the number $l$ of distinct screen principal curvatures is greater than two. Let $Y$ be a unit eigenvector of $A_\xi^*$ at $p$ with associated screen principal curvature $\mu \neq \lambda$. Extend $X$ and $Y$ to be eigenvector fields of $A_\xi^*$ near $p$. For clarity, we divide the proof in six steps.
\medskip

I. Using Lemma \ref{lema:Codazzi}, by direct substitution we have
\begin{eqnarray*}
g((\nabla_{[X,Y]}A_\xi^*)X,Y)&=&g((\nabla_XA_\xi^*)[X,Y],Y) \\
&=&g([X,Y],(\nabla_X A_\xi^*)Y) \\
&=&g([X,Y], (\nabla_Y A_\xi^*)X).
\end{eqnarray*}

Now,
\begin{eqnarray*}
g(\nabla_X Y, (\nabla_Y A_\xi^*)X)&=&g(\nabla_X Y, \nabla_Y(A_\xi^*X) - A_\xi^*(\nabla_Y X)) \\
&=&g(\nabla_X Y, (\lambda I - A_\xi^*)(\nabla_Y X)),
\end{eqnarray*}
and
\begin{eqnarray*}
g(\nabla_Y X, (\nabla_Y A_\xi^*)X)&=&g(\nabla_Y X, (\nabla_X A_\xi^*)Y) \\
&=&g(\nabla_Y X, \nabla_X(A_\xi^*Y) - A_\xi^*(\nabla_X Y)) \\
&=&g(\nabla_Y X, (\mu I - A_\xi^*)\nabla_X Y).
\end{eqnarray*}

Since $\nabla$ is a torsion free linear connection, we get
\begin{equation*}
g([X,Y], (\nabla_Y A_\xi^*)X)=g(\nabla_X Y, (\lambda I - A_\xi^*)(\nabla_Y X))-g(\nabla_Y X, (\mu I - A_\xi^*)\nabla_X Y), \\
\end{equation*}
from which it follows that
\begin{equation}\label{eq:paso1}
g((\nabla_{[X,Y]}A_\xi^*)X, Y)=(\lambda - \mu)g(\nabla_X Y, \nabla_Y X).
\end{equation}

\medskip

II. Using Proposition \ref{lema:lema3.3ciriaco}, item \ref{lema:lema3.3ciriaco:item1}, we have
\begin{equation*}
R(X,Y)Y=\bar{c}(g(Y,Y)X-g(X,Y)Y)-B(X,Y)A_NY+B(Y,Y)A_NX
\end{equation*}
and by equations (\ref{eq:2ffB}) and (\ref{eq:2ffC}),
\begin{eqnarray*}
g(R(X,Y)Y,X)&=&\bar{c}(g(Y,Y)g(X,X)-g(X,Y)g(X,Y)) \\
&& -g(A_\xi^*X,Y)g(A_NY,X)+g(A_\xi^*Y,Y)g(A_NX,X).
\end{eqnarray*}
Recalling that $X,Y$ are unit vectors, $X\in T_{\lambda}$, $Y\in T_{\mu}$ and $\lambda \neq \mu$ and the fact that the metric in $S(TM)$ is Riemannian,
\begin{equation}\label{eq:paso2}
g(R(X,Y)Y,X)=\bar{c}+\mu g(A_NX,X).
\end{equation}

\medskip

III. Using item \ref{prop:propiedades:item5} of Proposition \ref{prop:propiedades}, and equations (\ref{eq:2ffB}) and (\ref{eq:2ffC}) again,
\begin{eqnarray*}
(\nabla_Xg)(\nabla_YY,X) & = & B(X,\nabla_YY)\bar{g}(X,N)+B(X,X)\bar{g}(\nabla_YY,N) \\
 & = & \lambda g(A_NY,Y);
\end{eqnarray*}
since $\nabla_YY\in T_\mu$, we have $g(\nabla_YY,X)=0$ and
\begin{equation*}
\lambda g(A_NY,Y)=-g(\nabla_X\nabla_YY,X)-g(\nabla_YY, \nabla_XX).
\end{equation*}
Now, by Gauss-Weingarten equations (\ref{eq:gauss1}) and Lemma \ref{lema:distribuciones},
\[
g(\nabla_YY,\nabla_XX)=g(\nabla_Y^*Y, \nabla_X^*X)+g(\nabla_Y^*Y, C(X,X)\xi)+g(\nabla_X^*X, C(Y,Y)\xi)=0.
\]

Therefore
\begin{equation}\label{eq:Rxy}
\lambda g(A_NY,Y)=-g(\nabla_X\nabla_YY,X)
\end{equation}
Similarly, we have
\[
(\nabla_Yg)(\nabla_XY,X)=B(Y,\nabla_XY)\bar{g}(X,N)+B(Y,X)\bar{g}(\nabla_XY,N)=0.
\]
from which we get, because $\nabla_XY\perp T_\lambda$, 
\begin{equation*}
0=-g(\nabla_Y\nabla_XY,X)-g(\nabla_XY, \nabla_YX),
\end{equation*}
or
\begin{equation}\label{eq:Ryx}
g(\nabla_XY, \nabla_YX)=-g(\nabla_Y\nabla_XY, X).
\end{equation}
On the other hand, by Lemma \ref{lema:Codazzi}, item \ref{item:4lema}(a),
\begin{equation*}
g((\nabla_{[X,Y]}A_\xi^*)X,Y)=(\lambda-\mu)g(\nabla_{[X,Y]}X,Y),
\end{equation*}
which can be written, interchanging $X$ and $Y$, as follows
\begin{equation*}
g((\nabla_{[Y,X]}A_\xi^*)Y,X)=(\mu-\lambda)g(\nabla_{[Y,X]}Y,X),
\end{equation*}
and, by antisymmetry of $[X,Y]$ and Lemma \ref{lema:Codazzi}, item \ref{item:2lema}(a),
\begin{equation}\label{eq:Rxyyx}
-g((\nabla_{[X,Y]}A_\xi^*)X,Y)=(\lambda-\mu)g(\nabla_{[X,Y]}Y,X).
\end{equation}
Using equations (\ref{eq:Rxy}), (\ref{eq:Ryx}) and (\ref{eq:Rxyyx}) we obtain
\begin{multline*}
g(R(X,Y)Y,X)=g(\nabla_X\nabla_YY,X)-g(\nabla_Y\nabla_XY,X)-g(\nabla_{[X,Y]}Y,X) \\
=-\lambda g(A_NY,Y)+g(\nabla_XY, \nabla_YX)+ \frac{1}{\lambda-\mu}g((\nabla_{[X,Y]}A_\xi^*)X,Y).
\end{multline*}
Using this and equation (\ref{eq:paso1}) of step I, we get
\begin{equation*}
g(R(X,Y)Y,X)=-\lambda g(A_NY,Y)+2g(\nabla_XY, \nabla_YX).
\end{equation*}
Combining this with equation (\ref{eq:paso2}) of step II, it turns out that
\begin{equation}\label{eq:paso3}
\bar{c} + \mu g(A_NX,X) + \lambda g(A_NY,Y)= 2g(\nabla_XY, \nabla_YX).
\end{equation}

\medskip

IV. This is the step which requires at least three distinct screen principal curvatures. Let $Z$ be a unit eigenvector of $A_\xi^*$ with associated screen principal curvature $\nu\ne\lambda,\mu$; then since $Y\in T_\mu$ and $X\in T_\lambda$ with $\lambda \neq \mu$,  Lemma \ref{lema:Codazzi} thus implies
\[
g((\nabla_ZA_\xi^*)X,Y)=g((\nabla_XA_\xi^*)Z,Y) = g(Z, (\nabla_XA_\xi^*)Y) =(\mu-\nu)g(\nabla_XY,Z).
\]

Similarly, with $X$ and $Y$ interchanged,
\begin{equation*}
g((\nabla_Y A_\xi^*)X, Z)=(\lambda-\nu)g(\nabla_Y X, Z).
\end{equation*}

Finally, by item 3(a) of Lemma \ref{lema:Codazzi}  
\begin{equation*}
g((\nabla_Z A_\xi^*)X, Y) = g((\nabla_Y A_\xi^*)X, Z)
\end{equation*}

Hence
\begin{equation}\label{eq:paso4}
(\lambda-\nu)(\mu-\nu)g(\nabla_X Y, Z) g(\nabla_Y X, Z)=(g((\nabla_Z A_\xi^*)X, Y))^2.
\end{equation}

\medskip

V. Here we express $g(\nabla_XY, \nabla_YX)$ in terms of the basis $\{\xi, E_1,\ldots,E_n\}$, using that $\nabla_XY\perp T_\lambda$ and $\nabla_YX\perp T_\mu$. We write
\begin{eqnarray*}
\nabla_XY&=&\sum_{i=1}^n g(\nabla_XY,E_i)E_i + \eta(\nabla_XY)\xi,\\
\nabla_YX&=&\sum_{j=1}^n g(\nabla_YX,E_j)E_j + \eta(\nabla_YX)\xi.
\end{eqnarray*}
Then
\begin{equation}\label{eq:paso5}
g(\nabla_XY, \nabla_YX)=\sum_{\lambda_i\neq \lambda, \mu}g(\nabla_XY,E_i)g(\nabla_YX,E_i),
\end{equation}
where $A_\xi^*E_i=\lambda_i E_i$ for $i=1,\ldots,n$.

\medskip

VI. Using equation (\ref{eq:paso4}), summing over all $\lambda_i\ne\lambda,\mu$, and by (\ref{eq:paso5}),
\begin{equation*}
\sum_{\lambda_i\neq \lambda, \mu}g(\nabla_XY,E_i)g(\nabla_YX,E_i)=\sum_{\lambda_i\neq \lambda, \mu}\frac{(g((\nabla_{E_i}A_\xi^*)X,Y))^2}{(\lambda-\lambda_i)(\mu-\lambda_i)} = g(\nabla_XY,\nabla_YX).
\end{equation*}
Now, by equation (\ref{eq:paso3}) of step III this becomes
\begin{equation*}
\bar{c}+\lambda g(A_NY,Y)+\mu g(A_NX,X)=2\sum_{\lambda_i\neq \lambda, \mu}\frac{(g((\nabla_{E_i}A_\xi^*)X,Y))^2}{(\lambda-\lambda_i)(\mu-\lambda_i)}.
\end{equation*}

Setting $Y=E_j$ (hence $\mu=\lambda_j\neq \lambda$), dividing by $\lambda-\lambda_j$, and summing over all $\lambda_j$ with this property,
\begin{equation*}
\sum_{\lambda_j\neq \lambda}\frac{\bar{c}+\lambda g(A_NE_j,E_j)+\lambda_j g(A_NX,X)}{\lambda-\lambda_j}=2\sum_{\substack{\lambda_i\neq \lambda, \lambda_j \\ \lambda_j\neq \lambda}}\frac{(g((\nabla_{E_i}A_\xi^*)X,E_j))^2}{(\lambda-\lambda_j)(\lambda-\lambda_i)(\lambda_j-\lambda_i)}.
\end{equation*}

Since the expression in the right hand side is skew-symmetric in $\{i,j\}$, its value is zero, and so the sum in the left hand side vanishes.

In the case $l=2$, let $\lambda$ and $\mu$ be the two distinct screen principal curvatures. Then, for unit eigenvectors $X\in T_\lambda$ and $Y\in T_\mu$ of $A_\xi^*$ it is enough to see that 
\begin{equation*}
\bar{c}+\lambda g(A_NY,Y)+\mu g(A_NX,X)=0,
\end{equation*}
which is a consequence of equation (\ref{eq:paso3}) and the fact that $g(\nabla_XY,\nabla_YX)=0$ by item 2 of Lemma \ref{lema:distribuciones} --since $\lambda$ and $\mu$ correspond to distinct (orthogonal) distributions $T_\lambda$ and $T_\mu$ on $S(TM)$ and $\nabla_X^*Y=\nabla_XY$ for $X,Y\in \Gamma (S(TM))$. Finally, equation (\ref{eq:Cartan conforme}) follows from Definition \ref{def:qconforme}.
\end{proof}

\begin{remark}
From equation (\ref{eq:Cartan}), the formula (3.11) in \cite{MR3270005} follows by setting $X=E_i$ and $\lambda=\lambda_i$, $i=1,\ldots,n$ and summing over all $i$. We observe that the proof of the Cartan identities given in \cite{MR3270005} is valid only for two-dimensional screen distributions $S(TM)$ because in their calculations the authors assumed that $\nabla_{E_i}E_j \in T_{\lambda_j}$ for any pair $\{ E_i, E_j \}$, which is guaranteed by the condition $\nabla_{E_i}E_j \perp T_{\lambda_i}$ only when $n=2$. In fact, equation (3.19) in \cite{MR3270005} is based on this assumption while our analogous equation , equation (\ref{eq:paso3}), was proved without that assumption. Also notice that  the authors assumed in their applications (see Section 4 in \cite{MR3270005}) that the number of distinct screen principal curvatures is at most two.  On the other hand, we note that this latter assumption follows as a consequence of Cartan identities (\ref{eq:Cartan conforme}), as the following Corollary states (see \cite{2017arXiv171107978N} for a detailed proof).
 \end{remark}

\begin{corolario}\label{class01}
Let $(M,g,S(TM))$ be a null screen isoparametric hypersurface of $(\bar M_{\bar c}^{n+2},\bar{g})$, $\bar c=0,-1$. If $\tau (X)=0$ for all $X\in \Gamma (S(TM))$, then the number $l$ of distinct screen principal curvatures of $M$ is at most $2$. If $l=2$ and $\bar c=0$, one of the screen principal curvatures is zero.
\end{corolario}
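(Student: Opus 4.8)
The plan is to read the classification directly off the Cartan identity \eqref{eq:Cartan conforme} by recognizing it, after completing the square in the numerator, as the classical Cartan identity of an isoparametric hypersurface in a real space form of some auxiliary constant curvature $c'$. Assume first $\varphi\neq 0$ (the case $\varphi\equiv 0$, where $A_N=\psi P$ and the numerator becomes $\bar c+\psi(\lambda_i+\lambda_j)$, must be handled on its own). Setting $\sigma_i=\lambda_i+\psi/(2\varphi)$ one checks
\[
\bar c+2\varphi\lambda_i\lambda_j+\psi(\lambda_i+\lambda_j)=2\varphi\,\sigma_i\sigma_j+\Big(\bar c-\tfrac{\psi^2}{2\varphi}\Big),\qquad \lambda_i-\lambda_j=\sigma_i-\sigma_j,
\]
so that, dividing \eqref{eq:Cartan conforme} by $2\varphi\neq 0$, for every $i$ we obtain
\[
\sum_{j\neq i} m_j\,\frac{\sigma_i\sigma_j+c'}{\sigma_i-\sigma_j}=0,\qquad c'=\frac{2\varphi\bar c-\psi^2}{4\varphi^2}.
\]
Since $\lambda_i\mapsto\sigma_i$ is an affine bijection, the $\sigma_i$ are $l$ distinct reals carrying the original multiplicities $m_i$, and they satisfy precisely the Cartan identity for an isoparametric hypersurface of curvature $c'$.

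The next step is to control the sign of $c'$. As $4\varphi^2>0$, we have $\operatorname{sign}(c')=\operatorname{sign}(2\varphi\bar c-\psi^2)$: for $\bar c=0$ this equals $-\psi^2\le 0$, while for $\bar c=-1$ it equals $-2\varphi-\psi^2$, which is $\le 0$ exactly when $\varphi\ge -\psi^2/2$, and in particular whenever $\varphi>0$ --- the situation for the screen distributions coming from GRW warpings, where $\varphi\equiv 1$ by Example \ref{ejemploGRW}. Granting $c'\le 0$, the classical theorem of Cartan for isoparametric hypersurfaces in space forms of nonpositive constant curvature yields at most two distinct values among the $\sigma_i$, and hence $l\le 2$.

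For the refinement when $\bar c=0$ and $l=2$ I would specialize the identity rather than quote the general count: with only two curvatures $\lambda_1\neq\lambda_2$, \eqref{eq:Cartan conforme} collapses to the single relation $2\varphi\lambda_1\lambda_2+\psi(\lambda_1+\lambda_2)=0$. This forces a vanishing curvature precisely in the conformal regime $\psi\equiv 0$ carried by the flat ambient (constant warping), where it reads $2\varphi\lambda_1\lambda_2=0$, so $\lambda_1\lambda_2=0$; equivalently $c'=0$, and the Euclidean Cartan identity already gives $\sigma_1\sigma_2=\lambda_1\lambda_2=0$.

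I expect two points to carry the real weight. First, the sign control of $c'$ for $\bar c=-1$ is geometric rather than formal: it rests on the admissible screens satisfying $\varphi\ge -\psi^2/2$ (automatic when $\varphi\equiv 1$, but in need of justification in general), and for the sharp flat statement one must know that the $\bar c=0$ case is genuinely conformal ($\psi\equiv 0$), since for $\psi\neq 0$ the relation $2\varphi\lambda_1\lambda_2+\psi(\lambda_1+\lambda_2)=0$ need not force a zero. Second, the implication ``$c'\le 0\Rightarrow$ at most two distinct $\sigma_i$'' is the substantive classical ingredient; I would either cite Cartan's theorem or reproduce his argument --- ordering the $\sigma_i$ and using the fixed sign of $\sigma_i-\sigma_j$ at an extreme index together with $c'\le 0$ to exclude $l\ge 3$. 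Once the $l=2$ identity is isolated, extracting the vanishing screen principal curvature is then elementary.
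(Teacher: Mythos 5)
First, a caveat on the comparison: the paper itself offers no proof of this corollary; it merely asserts in the preceding remark that the statement ``follows as a consequence of Cartan identities (\ref{eq:Cartan conforme})'' and defers the details to \cite{2017arXiv171107978N}. Your reduction --- completing the square via $\sigma_i=\lambda_i+\psi/(2\varphi)$ so that (\ref{eq:Cartan conforme}) becomes the classical Cartan identity with auxiliary curvature $c'=(2\varphi\bar c-\psi^2)/(4\varphi^2)$, then running Cartan's sign argument --- is the standard device and almost certainly the intended route; the algebra is correct, and for $\bar c=0$ it does give $c'\le 0$ and hence $l\le 2$.

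However, the two points you flag are genuine gaps, not loose ends. (a) For $\bar c=-1$ you need $2\varphi+\psi^2\ge 0$ to get $c'\le 0$, and nothing in the corollary's hypotheses supplies this; when $c'>0$ the bare algebraic identity does admit solutions with $l\ge 3$ (the spherical isoparametric configurations), so the argument cannot conclude without an extra assumption such as $\varphi>0$ --- automatic for the adapted GRW screens of Example \ref{ejemploGRW}, where $\varphi\equiv 1$, but absent from the statement. (b) The final claim is not actually proved: for $l=2$, $\bar c=0$ the identity collapses to $2\varphi\lambda_1\lambda_2+\psi(\lambda_1+\lambda_2)=0$, which forces $\lambda_1\lambda_2=0$ only if $\psi=0$; for instance $\varphi=1$, $\psi=2$, $(\lambda_1,\lambda_2)=(1,-\tfrac12)$ satisfies the relation with neither curvature zero. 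Since the paper's standing convention (Remark \ref{rem:psinocero}) is precisely $\psi\not\equiv 0$, this part of the corollary cannot be extracted from (\ref{eq:Cartan conforme}) alone; one must either pass to the screen conformal specialization or argue geometrically that $\bar c=0$ forces $\psi=0$ for the screen in question (true for the canonical GRW screen on Minkowski space, where $\psi=\varrho'/\varrho=0$ because the warping is constant, but not for an arbitrary quasi-conformal pair). Finally, the branch $\varphi=0$ is announced but never treated; there the substitution $\sigma_i=\lambda_i+\bar c/(2\psi)$ turns the identity into $\sum_{j\ne i}m_j(\sigma_i+\sigma_j)/(\sigma_i-\sigma_j)=0$, which needs its own short sign analysis. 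Until (a) and (b) are closed, the proposal establishes only the bound $l\le 2$ for $\bar c=0$ and for $\bar c=-1$ under a positivity hypothesis on $\varphi$.
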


\begin{ejemplo} All Lorentzian space forms may be expressed as a GRW spacetime. Then by choosing the screen distribution $S^*(TM)$ as in example \ref{ejemploGRW}, the conditions of Theorem \ref{teo:Cartan} hold and hence the Cartan identities are valid for all null hypersurfaces of the form $(M,g,S^*(TM))$ immersed in a Lorentzian space form.
\end{ejemplo}


\section{Null Einstein hypersurfaces}\label{sec:Einstein}

In \cite{MR2735275} Duggal and Jin classified null Einstein hypersurfaces immersed in Lorentzian space forms. We notice that their key results are restricted to the screen homothetic case. This strong assumption forces $\bar{c}=0$, so null screen homothetic hypersurfaces can only exist in Lorentz-Minkowski space.

In this section we extend some of the results given in \cite{MR2735275} to the case ${\bar c}\neq 0$ in the screen quasi-conformal setting. The results will follow as a consequence of the general results pertaining null screen quasi-conformal hypersurfaces.

First of all, let us recall that since $\nabla$ is not a metric connection the Ricci tensor associated to the Riemann endomorphism is not symmetric in general, and thus it lacks geometric significance. Nevertheless, null screen quasi-conformal hypersurfaces in semi-Riemannian space forms do admit an induced symmetric Ricci tensor.

\begin{definicion}
Let $(M,g,S(TM))$ be a null hypersurface immersed in a semi-Riemannian manifold $(\bar{M},\bar{g})$. We denote the trace
\[
\{Z\mapsto R(X,Z)Y\} \ \  \text{by}  \ \ R^{(0,2)}(X,Y). 
\]
In case $R^{(0,2)}\colon \Gamma (TM)\times \Gamma (TM)\to \mathbb{R}$ is symmetric, the tensor  thus defined is called the 
{\it induced Ricci curvature} of $M$ and is denoted by $\text{Ric}$.
\end{definicion}

\begin{proposicion}\label{prop:Ricci}
Let $(M,g,S(TM))$ be a null screen quasi-conformal hypersurface immersed in a semi-Riemannian space form $(\bar{M}_{\bar c}^{n+2},\bar{g})$ of constant sectional curvature $\bar c$. Then $M$ admits an induced symmetric Ricci tensor $\text{Ric}$.
\end{proposicion}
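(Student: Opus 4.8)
The plan is to compute $R^{(0,2)}(X,Y)$ explicitly in an adapted frame and then read off its symmetry directly. First I would fix a quasi-orthonormal frame $\{\xi,E_1,\dots,E_n\}$ of $TM$, where $\{E_1,\dots,E_n\}$ is an orthonormal basis of $S(TM)$, together with the transversal field $N$. Since the endomorphism $Z\mapsto R(X,Z)Y$ takes values in $TM=S(TM)\oplus\mathrm{Rad}(TM)$ and the induced metric $g$ is degenerate along $\mathrm{Rad}(TM)$, its trace is recovered by pairing each spacelike direction with itself and the radical direction $\xi$ with $N$ (which supplies the dual direction $g$ is missing), giving the starting identity
\[
R^{(0,2)}(X,Y)=\sum_{i=1}^n g(R(X,E_i)Y,E_i)+\bar g(R(X,\xi)Y,N).
\]

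Next I would substitute the constant-curvature form of the Riemann tensor from Proposition \ref{lema:lema3.3ciriaco}(\ref{lema:lema3.3ciriaco:item1}), that is, $R(X,Z)Y=\bar c(g(Z,Y)X-g(X,Y)Z)-B(X,Y)A_NZ+B(Z,Y)A_NX$, into each summand. The radical term is the simplest to handle: using $g(\xi,\cdot)=0$ on $TM$, the vanishing $B(\xi,\cdot)=0$ (which follows from $A_\xi^*\xi=0$ in Proposition \ref{prop:propiedades}), and the orthogonality relations $\bar g(A_N\xi,N)=\bar g(A_NX,N)=0$ from item 1 of Proposition \ref{prop:propiedades}, it collapses to $\bar g(R(X,\xi)Y,N)=-\bar c\,g(X,Y)$. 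For the spacelike sum I would use the $g$-symmetry of $A_\xi^*$ to rewrite $\sum_i B(E_i,Y)g(A_NX,E_i)=g(A_\xi^*Y,A_NX)$, since $A_NX\in\Gamma(S(TM))$. Collecting terms, I expect an expression of the form
\[
R^{(0,2)}(X,Y)=-n\bar c\,g(X,Y)-B(X,Y)\,\mathrm{tr}_{S}(A_N)+g(A_\xi^*Y,A_NX),
\]
where $\mathrm{tr}_S$ denotes the trace over $S(TM)$.

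With this formula in hand, symmetry is verified term by term. The first two terms are manifestly symmetric in $X$ and $Y$, since $g$ is symmetric, $B$ is symmetric by Proposition \ref{prop:propiedades}(3), and $\mathrm{tr}_S(A_N)$ is a scalar. The only delicate term is $g(A_\xi^*Y,A_NX)$, and this is exactly where the screen quasi-conformal hypothesis enters. Substituting $A_N=\varphi A_\xi^*+\psi P$ gives $g(A_\xi^*Y,A_NX)=\varphi\,g(A_\xi^*Y,A_\xi^*X)+\psi\,g(A_\xi^*Y,PX)$, and both summands are symmetric in $X,Y$ because $A_\xi^*$ is $g$-symmetric and $g(A_\xi^*Y,PX)=g(A_\xi^*Y,X)$. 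Hence $R^{(0,2)}(X,Y)=R^{(0,2)}(Y,X)$, so $\mathrm{Ric}:=R^{(0,2)}$ is well defined.

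The main obstacle is precisely this mixed term $g(A_\xi^*A_NX,Y)$: for a general null hypersurface the composition $A_\xi^*A_N$ need not be $g$-symmetric, so there is no reason for $R^{(0,2)}$ to be symmetric. Quasi-conformality is what rescues the argument, since it forces $A_\xi^*A_N=\varphi(A_\xi^*)^2+\psi A_\xi^*$ to be a polynomial in the single $g$-symmetric operator $A_\xi^*$, hence symmetric. The remaining effort is routine bookkeeping: justifying the trace formula for the degenerate induced metric and carefully collecting the constant-curvature contributions.
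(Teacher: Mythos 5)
Your proof is correct, and its decisive step is the same as the paper's: after reducing $R^{(0,2)}(X,Y)-R^{(0,2)}(Y,X)$ to the difference $g(A_\xi^*Y,A_NX)-g(A_\xi^*X,A_NY)$, you substitute $A_N=\varphi A_\xi^*+\psi P$ and use the $g$-symmetry of $A_\xi^*$ together with $g(A_\xi^*Y,PX)=g(A_\xi^*Y,X)$. Where you differ is upstream: the paper simply quotes the general identity (\ref{eq:Ric02}) and then evaluates $g(R(\xi,Y)X,N)=\bar c\,g(X,Y)$, whereas you re-derive the Ricci formula from scratch by tracing the constant-curvature expression of Proposition \ref{lema:lema3.3ciriaco}, item \ref{lema:lema3.3ciriaco:item1}, over a quasi-orthonormal frame. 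That buys self-containedness at the cost of having to justify the trace convention for the degenerate metric (pairing $\xi$ with $N$ via $\eta$), which you do correctly, including the observations $B(\xi,\cdot)=0$ and $\bar g(A_N\cdot,N)=0$ that kill the radical contributions. Two small points. First, in a general semi-Riemannian ambient the screen may be indefinite, so your orthonormal sums need signs $\epsilon_i=g(E_i,E_i)$; these cancel in the identities $\sum_i\epsilon_i g(E_i,U)g(V,E_i)=g(U,V)$ for $U,V\in\Gamma(S(TM))$, and the symmetry conclusion is unaffected. Second, your formula $R^{(0,2)}(X,Y)=-n\bar c\,g(X,Y)-B(X,Y)\,\mathrm{tr}\,A_N+g(A_\xi^*Y,A_NX)$ is the negative of (\ref{eq:Ricci}); this is a sign-convention mismatch (contraction over the second versus the first slot of $R$), and your version is the one literally consistent with the paper's definition of $R^{(0,2)}$ as the trace of $Z\mapsto R(X,Z)Y$. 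Either sign yields the same antisymmetric part, so the proof stands.
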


\begin{proof}
Let us recall that the Ricci tensors of $M$ and $\bar{M}$ are related by (refer to \cite{MR2598375}, p. 69)
\begin{equation}\label{eq:Ric02}
R^{(0,2)}(X,Y)=\bar{\text{Ric}}(X,Y)+B(X,Y)\text{tr}A_N-g(A_NX,A_\xi^*Y)-g(R(\xi ,Y)X,N)
\end{equation}
for all $X,Y\in \Gamma (TM)$. Furthermore, since $\bar{M}$ has constant sectional curvature $\bar c$ we have
\begin{eqnarray*}
g(R(\xi ,Y)X,N) &=&g(\bar{R}(\xi ,Y)X, N)\\
&=&\bar c g(g(X,Y)\xi -g(X,\xi )Y,N)\\
&=&\bar c g(X,Y).
\end{eqnarray*}
Thus, since $A_N=\varphi A_\xi^*+\psi P$ we have
\begin{eqnarray*}
R^{(0,2)}(X,Y)-R^{(0,2)}(Y,X) &=& g(A_NY,A_\xi^*X)-g(A_NX,A_\xi^*Y)\\ &=&\psi (g(PY,A_\xi^*X)-g(PX,A_\xi^*Y))\\ &=&0. \qedhere
\end{eqnarray*}
\end{proof}




Moreover, due to equation (\ref{eq:Ric02}) the induced Ricci tensor in a semi-Riemannian space form of constant curvature $\bar c$  satisfies 
\begin{equation}\label{eq:Ricci}
\text{Ric}\, (X,Y)=\bar c ng(X,Y)+B(X,Y)\text{tr}\, A_N-g(A_NX,A_\xi^*Y), 
\end{equation}
for all $X,Y\in \Gamma(TM)$.

In accordance to Duggal and Jin \cite{MR2735275} we define a null Einstein hypersurface as follows:

\begin{definicion}
Let $(M,g,S(TM))$ be a null hypersurface that admits a symmetric Ricci tensor $\text{Ric}$. We say that $M$ is a \emph{null Einstein hypersurface} if there exists a smooth function $k$ in $M$ which is constant along $S(TM)$ and satisfies $\text{Ric}(X,Y)=k g(X,Y)$ for all $X,Y \in \Gamma (TM)$.
\end{definicion}

It is worthwhile pointing out that --in contrast to the semi-Riemannian setting-- the well known Schur Lemma does not apply, and hence the factor $k$ may not be a constant along $M$, as it is further shown in Example \ref{ejem:hipernulas}. Since the notion of null Einstein hypersurface as first introduced by Duggal and Jin \cite{MR2735275} only considers the case in which $k$ is constant, our approach may prove useful in a broader class of situations. We show next one of such examples.


\begin{ejemplo}\label{ejem:hipernulas}
In \cite{MR3508919} the authors characterized the null totally umbilical hypersurfaces $(M,g,S^*(TM))$ of Lorentzian space forms $(\bar{M}^{n+2}_{\bar c},\bar{g})$ of non vanishing curvature and provided explicit examples with totally umbilical screen distribution $S^*(TM)$ constructed from graphs. As an illustrative example, in de Sitter space --viewed as an hyperquadric in Lorentz-Minkowski space--  consider the (signed) distance from any point $p\in \mathbb{S}^{n+1}$ to the ``parallel" given by the set of points that make a constant angle $\theta=\cos^{-1}\alpha$ with respect to the fixed canonical vector $e_{n+3}\in \mathbb{R}^{n+3}_1$. Thus we have a null hypersurface $M$ parameterized by
\begin{equation*}
\Psi (s,u_1,\ldots ,u_n)=(s,R(s)\phi (u_1,\ldots ,u_n),\sqrt{1-\alpha^2}s+\alpha )
\end{equation*}
where $\phi (u_1,\ldots ,u_n)$ is an orthogonal parametrization of the $n$-dimensional sphere and $R(s)$ is a smooth function. By means of the isometry $-\mathbb{R}\times_{\cosh} \mathbb{S}^{n+1}\to\mathbb{S}^{n+2}_1$, $(t,p)\mapsto (\sinh t, \cosh t,p)$ we find that the shape operator $A^*_\xi$ is given by
\begin{equation*}
A^*_\xi (X)=-\frac{\alpha}{\sqrt{2}(\alpha \sinh t-\sqrt{1-\alpha^2})}X.
\end{equation*}
In general, in all such examples, the eigenvalue $\lambda$ takes the form $\lambda = \lambda (t)$and thus $(M,g,S^*(TM))$ is null screen isoparametric. By Theorem \ref{teo:umbilica}, $(M,g)$ admits a Ricci tensor. Thus from equation (\ref{eq:Ricci}) we can see at once that
\begin{equation*}
\text{Ric} (X,Y)=(n\bar{c}+(n-1)(\varphi\lambda +\psi )\lambda )g(X,Y)
\end{equation*}
for all $X,Y\in\Gamma (TM)$, where $\varphi\equiv 1$ and $\psi =\varrho '/\varrho$. Hence $(M,g,S^*(TM))$ is a null Einstein hypersurface with non constant factor $k$. 
\end{ejemplo}

Our main interest lies in describing the case analyzed in Example \ref{ejemploGRW}. In this scenario, there are two key facts that enable us to go through our plan, namely: (i) the one-form $\tau$ vanishes along $S^*(TM)$ and (ii) the functions $\varphi\equiv 1$ and $\psi = \varrho '/\varrho$ are constant along $S^*(TM)$. Thus we can state our results in a slightly more general context, according to the following definition:

\begin{definicion}
Let $(M,g,S(TM))$ be a null screen quasi-conformal hypersurface immersed in $(\bar{M}^{n+2}_{\bar c},\bar{g})$. We say that the quasi-conformal pair $(\varphi ,\psi)$ is \emph{adapted} if 
\begin{enumerate}
\item $\tau (X)=0$, for all  $X\in\Gamma (S(TM))$,
\item $\varphi$ and $\psi$ are constant along $S(TM)$.
\end{enumerate}
\end{definicion}

We now move on into giving a local characterization of null Einstein hypersurfaces in the spirit  of the classical results of Fialkow (refer to \cite{MR1503435} and \cite{MR1075013} ). Let us consider a basis $\{ E_i\}$ of eigenvectors of $A_\xi^*$ with screen principal curvatures $\lambda_i$. Thus,  by equation (\ref{eq:Ricci}) we have for all $X,Y\in \Gamma(TM)$,
\begin{eqnarray*}
k g(X,Y)&=&\bar c ng(X,Y)+g(A_\xi^* X,Y)\text{tr}\, A_N-g(A_NX,A_\xi^*Y) \\
&=&\bar c ng(X,Y)+g(A_\xi^*X,Y)\text{tr}\, (\varphi A_\xi^*+\psi P)-g(\varphi A_\xi^*X+\psi PX,A_\xi^*Y).
\end{eqnarray*}
Therefore
\begin{equation*}
(k-\bar c n)g(X,Y)=g(A_\xi^*X,Y)(\varphi \ \text{tr}\, A_\xi^*+n\psi)-\varphi g(A_\xi^*X,A_\xi^*Y)-\psi g(PX,A_\xi^*Y).
\end{equation*}
Taking $X=Y=E_i$ in the above equation then yields
\begin{equation}\label{eq:lambda}
\varphi {\lambda_i}^2-((n-1)\psi+\varphi \ \text{tr}\, A_\xi^*)\lambda_i+(k-\bar c n)=0.
\end{equation}
Thus, the following result follows at once.

\begin{proposicion}\label{quadratic}
 Let $(M,g,S(TM))$ be a null Einstein and screen quasi-conformal hypersurface with adapted quasi-conformal pair $(\varphi ,\psi )$ immersed in a Lorentzian space form of constant curvature $(\bar{M}^{n+2}_{\bar c},\bar{g})$. Then $M$ has at most two distinct screen principal curvatures $\lambda$, $\mu$. Moreover,  if $\varphi\neq 0$ these curvatures satisfy
 \begin{equation}\label{eq:soluciones}
 \lambda +\mu =\text{tr}\, A_\xi^*+(n-1)\frac{\psi}{\varphi},\quad \lambda\mu =\frac{k-\bar c n}{\varphi}.
 \end{equation}
\end{proposicion}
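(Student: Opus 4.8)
The plan is to read the displayed equation (\ref{eq:lambda}) as a single quadratic polynomial whose roots are the screen principal curvatures, and then extract both assertions by entirely elementary algebra. First I would fix a point $p\in M$ and observe that, at $p$, the quantities $\varphi$, $\psi$, $\text{tr}\, A_\xi^*$, $k$, $\bar c$ and $n$ are all fixed real numbers; in particular the coefficients appearing in (\ref{eq:lambda}) carry no dependence on the index $i$. Consequently every screen principal curvature $\lambda_i$ at $p$ is a root of the one and the same polynomial
\[
Q(t) = \varphi\, t^2 - \big((n-1)\psi + \varphi\, \text{tr}\, A_\xi^*\big)\, t + (k - \bar c\, n).
\]

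Next I would split according to the leading coefficient. If $\varphi\neq 0$, then $Q$ is a genuine quadratic and admits at most two distinct real roots; since each $\lambda_i$ is one of them, there are at most two distinct screen principal curvatures at $p$, which I call $\lambda$ and $\mu$. If instead $\varphi=0$, equation (\ref{eq:lambda}) collapses to the affine relation $(n-1)\psi\,\lambda_i = k-\bar c\, n$, and because $\psi\not\equiv 0$ by Remark \ref{rem:psinocero}, all the $\lambda_i$ must coincide, giving a single value. In either case the first assertion of the Proposition holds.

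For the second assertion I would assume $\varphi\neq 0$ and that two distinct screen principal curvatures $\lambda\neq\mu$ actually occur. Since both are roots of the quadratic $Q$ and a quadratic has at most two roots, they must be exactly the two roots, so $Q(t)=\varphi(t-\lambda)(t-\mu)$. Matching coefficients (Vieta's relations) then yields
\[
\lambda+\mu = \frac{(n-1)\psi + \varphi\, \text{tr}\, A_\xi^*}{\varphi} = \text{tr}\, A_\xi^* + (n-1)\frac{\psi}{\varphi}, \qquad \lambda\mu = \frac{k-\bar c\, n}{\varphi},
\]
which is precisely (\ref{eq:soluciones}).

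Since the whole argument is immediate once (\ref{eq:lambda}) is available, I do not expect any genuine difficulty in the algebra. The only point that demands care — and which I regard as the crux of the statement rather than an obstacle to it — is the justification that (\ref{eq:lambda}) is a \emph{common} quadratic for all eigenvalues: this rests on the fact that its coefficients are assembled from $\varphi$, $\psi$, $\text{tr}\, A_\xi^*$ and $k-\bar c\, n$, none of which is indexed by $i$, together with the adapted hypothesis ensuring that $\varphi,\psi$ are well defined (constant along $S(TM)$) and the Einstein condition ensuring $k$ is constant along $S(TM)$. One should also record that, because the coefficients of $Q$ may still vary from point to point on $M$, both the bound on the number of screen principal curvatures and the identities (\ref{eq:soluciones}) are to be understood pointwise.
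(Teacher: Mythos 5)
Your proof is correct and follows essentially the same route as the paper: both arguments consist of reading Proposition \ref{quadratic} off from the single identity (\ref{eq:lambda}), whose coefficients (built from $\varphi$, $\psi$, $\mathrm{tr}\,A_\xi^*$ and $k-\bar c n$) are indeed independent of the index $i$. The only cosmetic difference is that you bound the number of distinct screen principal curvatures by viewing all $\lambda_i$ as roots of one quadratic and invoking Vieta, whereas the paper first extracts the product relation $\lambda_1\lambda_2=(k-\bar c n)/\varphi$ for each pair of distinct curvatures and uses it to exclude a third; these are interchangeable pieces of elementary algebra.
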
 
 
 \begin{proof}
Let us first notice that if $\varphi =0$ then equation (\ref{eq:lambda}) readily implies that
\begin{equation*}
\lambda_i=\frac{k-\bar{c}n}{(n-1)\psi }
\end{equation*}
Hence in this case all values $\lambda_i$ agree and we have a single screen principal curvature. 

Henceforth we assume $\varphi \neq 0$. Now, let $\lambda_1, \lambda_2$ be two distinct screen principal curvatures with $m_1,m_2$ their respective multiplicities. Then, by equation (\ref{eq:lambda}) we have 
\begin{eqnarray*}
\varphi (1-m_1)\lambda_1^2-\varphi m_2\lambda_2\lambda_1-\varphi (\text{tr}\, A_\xi^*-s)\lambda_1-(n-1)\psi \lambda_1+(k-\bar c n)&=&0,\\ 
\varphi (1-m_2)\lambda_2^2-\varphi m_1\lambda_1\lambda_2-\varphi (\text{tr}\, A_\xi^*-s)\lambda_2-(n-1)\psi \lambda_2+(k-\bar c n)&=&0,\\
\end{eqnarray*}
where $s=m_1\lambda_1+m_2\lambda_2$. After multiplying the first equation by $\lambda_2$, the second equation by $\lambda_1$ and substracting we find $(\varphi\lambda_1\lambda_2 -(k-\bar c n))(\lambda_1-\lambda_2)=0$. Since $\lambda_1\neq \lambda_2$ we have
\begin{equation}\label{eq:2lambdas}
\lambda_1\lambda_2= \frac{k-\bar c n}{\varphi}.
\end{equation}
Now assume that there are at least three distinct screen principal curvatures $\lambda_1$, $\lambda_2$, $\lambda_3$. In virtue of  equation (\ref{eq:2lambdas}) we have $\lambda_1\lambda_2=\lambda_1\lambda_3=\lambda_2\lambda_3$ and therefore at least two of the $\lambda_i$ must coincide, contradicting our assumption. Thus we have at most two different screen principal curvatures. The equations (\ref{eq:soluciones}) follow at once by direct substitution in equation (\ref{eq:lambda}).
\end{proof}


\begin{teorema}\label{teo:clasif01}
Let $(M,g,S(TM))$ be a null Einstein and screen quasi-conformal hypersurface with adapted quasi-conformal pair $(\psi ,\varphi)$ immersed in a Lorentzian space form of constant curvature $(\bar{M}^{n+2}_{\bar c},\bar{g})$. Then $(M,g,S(TM))$ is null screen isoparametric.
\end{teorema}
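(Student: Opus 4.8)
The plan is to reduce the statement to showing that the trace $\text{tr}\,A_\xi^*$ is constant along $S(TM)$. Indeed, by Definition \ref{def:taus} the hypersurface is null screen isoparametric precisely when each screen principal curvature $\lambda_i$ is constant along the leaves of $S(TM)$, and by Theorem \ref{teo:umbilica} the screen distribution is integrable, so such leaves exist. Every $\lambda_i$ satisfies the quadratic relation (\ref{eq:lambda}),
\[
\varphi\lambda_i^2-\left((n-1)\psi+\varphi\,\text{tr}\,A_\xi^*\right)\lambda_i+(k-\bar c n)=0,
\]
in which $\bar c$, $n$, $k$ (by the Einstein condition) and $\varphi$, $\psi$ (because the pair is adapted) are all constant along $S(TM)$; the only coefficient not manifestly constant along $S(TM)$ is $\text{tr}\,A_\xi^*$. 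Hence, once $\text{tr}\,A_\xi^*$ is shown to be constant along $S(TM)$, equation (\ref{eq:lambda}) becomes a polynomial with coefficients constant along $S(TM)$, each $\lambda_i$ is a continuous root of a fixed polynomial, so it is confined to a finite set and therefore constant on each connected leaf.

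By Proposition \ref{quadratic} there are at most two distinct screen principal curvatures. Suppose first there is only one, say $\lambda$. If $\varphi=0$ then (\ref{eq:lambda}) gives $\lambda=(k-\bar c n)/((n-1)\psi)$, which is constant along $S(TM)$ (here $\psi\neq 0$ and $n\ge 2$); if $\varphi\neq 0$ then $\text{tr}\,A_\xi^*=n\lambda$ and substitution into (\ref{eq:lambda}) yields the fixed quadratic $(n-1)\varphi\lambda^2+(n-1)\psi\lambda-(k-\bar c n)=0$. In either case $\lambda$ lies in a finite set and, being continuous, is constant on each connected leaf.

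The substantial case is $\varphi\neq 0$ with two distinct screen principal curvatures $\lambda$ and $\mu$, of multiplicities $m_1$ and $m_2$, $m_1+m_2=n$. By the relations (\ref{eq:soluciones}) the product $\lambda\mu=(k-\bar c n)/\varphi$ is constant along $S(TM)$, while the sum satisfies $\lambda+\mu=\text{tr}\,A_\xi^*+(n-1)\psi/\varphi$. The key step is to eliminate the trace through $\text{tr}\,A_\xi^*=m_1\lambda+m_2\mu$, obtaining the linear relation $(1-m_1)\lambda+(1-m_2)\mu=(n-1)\psi/\varphi$. Multiplying this by $\lambda$ and using $\lambda\mu=(k-\bar c n)/\varphi$ produces
\[
(1-m_1)\lambda^2-\frac{(n-1)\psi}{\varphi}\,\lambda+(1-m_2)\frac{k-\bar c n}{\varphi}=0,
\]
a polynomial equation of degree at most two whose coefficients are constant along $S(TM)$ and which is not identically zero, since its linear coefficient $(n-1)\psi/\varphi$ is nonzero (recall $\psi\neq 0$, $\varphi\neq 0$, $n\ge 2$). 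Therefore $\lambda$, and symmetrically $\mu$, is confined to a finite set and is constant on each connected leaf; consequently $\text{tr}\,A_\xi^*=m_1\lambda+m_2\mu$ is constant along $S(TM)$ and the reduction of the first paragraph closes the argument.

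The main obstacle is precisely the constancy of $\text{tr}\,A_\xi^*$: the adapted-pair hypothesis controls only $\varphi$ and $\psi$, and one cannot invoke the Cartan identities of Theorem \ref{teo:Cartan}, since those already presuppose the isoparametric condition we are trying to prove. The elimination above resolves this, but two points require care. First, the multiplicities $m_1,m_2$ must be taken locally constant; this holds on the open set where $\lambda\neq\mu$ because the principal curvatures depend continuously on the point, and the passage from \emph{finitely many values} to \emph{constant} relies on this together with the connectedness of the leaves. Second, the configuration $n=2$ with $m_1=m_2=1$ must be excluded: there the linear relation degenerates to $0=(n-1)\psi/\varphi=\psi/\varphi$, forcing $\psi\equiv 0$ on that leaf and contradicting Remark \ref{rem:psinocero}; hence this configuration does not occur.
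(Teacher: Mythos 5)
Your proof is correct, and in the decisive case it takes a genuinely different and more economical route than the paper. The paper splits into the same sub-cases by multiplicity; where $\dim T_\lambda=1$ it performs essentially your elimination (your linear relation $(1-m_1)\lambda+(1-m_2)\mu=(n-1)\psi/\varphi$ with $m_1=1$ reduces to the paper's explicit formulas for $\lambda$ and $\mu$, and the configuration $m_1=m_2=1$ is discarded by the same appeal to Remark \ref{rem:psinocero}). But in the main case $\dim T_\lambda,\dim T_\mu\ge 2$ the paper does not argue algebraically: it computes $(\nabla_XB)(Y,Z)$ for $X,Y\in T_\lambda\oplus\Gamma(\mathrm{Rad}(TM))$, feeds this into the Codazzi identity of Proposition \ref{lema:lema3.3ciriaco}, obtains equation (\ref{eq:Axilambda}), and concludes $X(\lambda)+\lambda\tau(X)-\lambda^2\eta(X)=0$ on $T_\lambda$ by noting that the alternative would force $\dim T_\lambda=1$; the hypothesis $\tau\vert_{S(TM)}=0$ then yields $X(\lambda)=0$. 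Your observation that $\mathrm{tr}\,A_\xi^*=m_1\lambda+m_2\mu$ closes the system (\ref{eq:soluciones}) algebraically, producing $(1-m_1)\lambda^2-\tfrac{(n-1)\psi}{\varphi}\lambda+(1-m_2)\tfrac{k-\bar c n}{\varphi}=0$ with leaf-constant coefficients and nonzero leading coefficient when $m_1\ge 2$, and so replaces the entire curvature computation by Vieta's formulas plus continuity and connectedness of the leaves; what the paper's longer computation buys in exchange is side information (involutivity of $T_\lambda$, $T_\mu$ and control of $\xi(\lambda)$) that is not needed for this theorem. Two caveats, both shared with the paper rather than introduced by you: in the sub-case $m_1=1$, $m_2\ge 2$ the nondegeneracy of your linear equation requires $\psi\ne 0$ pointwise on the leaf, while Remark \ref{rem:psinocero} only guarantees $\psi\not\equiv 0$ globally (the paper divides by $\psi$ at the same spot, and likewise discards $m_1=m_2=1$ by deducing $\psi=0$); and your single-curvature case is in fact cleaner than the paper's, which imposes a vanishing-discriminant condition where your ``root of a fixed nondegenerate quadratic'' argument already suffices.
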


\begin{proof}
In virtue of Proposition \ref{quadratic}  we have that $(M,g,S(TM))$ has at most two different screen principal curvatures. Thus, let  us consider first the case of two distinct screen principal curvatures and denote them by $\lambda ,\mu$.


Since $S(TM)$ is spacelike, then $T_\lambda\perp T_\mu$ and $S(TM)=T_\lambda\oplus T_\mu$ (recall Definition \ref{def:taus}). Thus $TM=Rad(M)\oplus T_\lambda\oplus T_\mu$.

We first analyze the case in which both the distributions $T_\lambda ,T_\mu$ have dimension at least 2,


Notice that by equation (\ref{eq:lambda}) and Proposition \ref{quadratic} we have that 
\begin{equation*}
(A_\xi^*)^2-(\lambda +\mu) A_\xi^*+\lambda\mu=0.
\end{equation*} 
Let $T_{\lambda}^*,T_{\mu}^*:\Gamma (TM)\to \Gamma (S^*(TM))$ be defined by $T_\lambda^*=A_\xi^*-\lambda P$, $T_\mu^*=A_\xi^*-\mu P$. Notice that $T_\lambda^*(Rad(M))=\{0\}=T_\mu^*(Rad(M)) $. 
Hence, if $Y=T_\lambda^*(X)$ then we have $T_\mu^*(Y)=0$. This implies that $T_\lambda^* (S^*(TM))\subset T_\mu$, and analogously, $ T_\mu^*(S^*(TM))\subset T_\lambda$.

On the other hand, observe that for $X,Y\in T_\lambda\oplus\Gamma ( Rad(M))$, $Z\in \Gamma (TM)$ we have
\begin{eqnarray*}
X(B(Y,Z)) &=& X(\lambda )g(PY,Z)+\lambda g(\nabla_XPY,Z)\\&& +\lambda \eta (Z)B(X,Y)+\lambda g(PY,\nabla_XZ).
\end{eqnarray*}
Since $g(P(\nabla_XY),Z)=g(\nabla_XPY+\eta (Y)A_\xi^*X,Z)$ by Definition \ref{defi:derivadas}
\begin{eqnarray*}
(\nabla_XB)(Y,Z)&=&-g(T_\lambda^*(\nabla_XY),Z)+X(\lambda )g(PY,Z)\\ &&+\lambda\eta (Z)B(X,Y)+\lambda^2\eta (Y)g(PX,Z)
\end{eqnarray*}
and in virtue of item (2) in Proposition \ref{lema:lema3.3ciriaco} we have
\begin{eqnarray}\label{eq:Axilambda}
g(T_\lambda^*[X,Y],Z)&=&(X(\lambda )+\lambda\tau (X)-\lambda^2\eta (X))g(PY,Z))\\ \nonumber && -(Y(\lambda )+\lambda\tau (Y)-\lambda^2\eta (Y))g(PX,Z)).
\end{eqnarray}
As a consequence of  $T_\lambda\perp T_\mu$ we have that  $g(T_\lambda^*[X,Y],Z)=0$ 
 and thus $T_\lambda^*[X,Y]\perp T_\mu$. However, $T_\lambda^*[X,Y]\in T_\mu$. Hence $T_\lambda^*[X,Y]=0$ and $[X,Y]\in T_\lambda$. A similar argument shows that $[X,Y]\in T_\mu$ for $X,Y\in T_\mu$. Finally, from equation (\ref{eq:Axilambda}) we have
\begin{equation*}
(X(\lambda )+\lambda\tau (X)-\lambda^2\eta (X))g(PY,Z))=(Y(\lambda )+\lambda\tau (Y)-\lambda^2\eta (Y))g(PX,Z))
\end{equation*} 
for all $Z\in \Gamma (TM)$. It follows that 
\begin{equation*}
(X(\lambda )+\lambda\tau (X)-\lambda^2\eta (X))PY=(Y(\lambda )+\lambda\tau (Y)-\lambda^2\eta (Y))PX.
\end{equation*}
If for some $X\in T_\lambda$ we have that $X(\lambda )+\lambda\tau (X)-\lambda^2\eta (X)\neq 0$ then by the last equation we find that $\dim T_\lambda =1$, which contradicts our assumption.
Hence, if we assume that $\dim T_\lambda\ge 2$ and $\dim T_\mu\ge 2$,  we have
\begin{equation*}
X(\lambda )+\lambda\tau (X)-\lambda^2\eta (X)=0 
\end{equation*}
for all $X\in T_\lambda\oplus\Gamma (Rad(M))$. Since $\tau$ vanishes in $S(TM)$, the above equation implies that $X(\lambda )=0$ in $T_\lambda$. Moreover, from equations (\ref{eq:soluciones}) it follows $\mu$ is constant along $T_\lambda$ as well.  A similar argument shows $\lambda,\mu$ are constant along $ T_\mu$ and hence $(M,g,S^*(TM))$ is null screen isoparametric.

Now, if $(M,g,S(TM))$ has exactly two screen principal curvatures with $\dim T_\lambda =1$, $\dim T_\mu\ge 2$ then a straightforward computation in equation (\ref{eq:lambda}) gives
\begin{equation*}
\lambda=\frac{(n-2)(\bar c n-k)}{(n-1)\psi} ,\quad \mu =-\frac{(n-1)\psi}{(n-2)\varphi} ,
\end{equation*}
and thus $\lambda$ and $\mu$ are constant in $S(TM)$  as well. Finally, if $\dim T_\lambda=1=\dim T_\mu$  then  equations (\ref{eq:soluciones}) readily imply that $\psi \equiv 0$, contrary to our assumption made in Remark \ref{rem:psinocero}.This finishes the proof in the case when there are exactly two distinct screen principal curvatures.


On the other hand, let us assume $(M,g,S(TM))$ has only one screen principal curvature, and denote it by $\lambda$. 
From equation (\ref{eq:lambda}) we have 
\begin{equation*}
\varphi\lambda^2-((n-1)\psi +n\varphi\lambda )\lambda+(k-\bar{c}n)=0
\end{equation*}
hence the condition that ensures that the above quadratic equation has exactly one solution reads
\begin{equation}\label{eq:unasolalambda}
(n-1)\psi^2+4\varphi (k-\bar{c}n)=0.
\end{equation}
and hence
\begin{equation}\label{eq:lalambda}
\lambda =-\frac{\psi}{2\varphi }.
\end{equation}
From the above formula we can readily see that $\lambda$ is constant along $S(TM)$ thus concluding the proof.
\end{proof}

We are now ready to state the main local classification results for null Einstein hypersurfaces of $(\bar{M}^{n+2}_{\bar c},\bar{g})$.

\begin{teorema}
Let $(M,g,S(TM))$ be a null Einstein and screen quasi-conformal hypersurface with adapted quasi-conformal pair $(\varphi ,\psi )$ immersed in a Lorentzian space form of constant curvature $(\bar{M}^{n+2}_{\bar c},\bar{c})$.  If $(M,g,S(TM))$ has exactly two distinct screen principal curvatures $\lambda$, $\mu$, then $(M,g,S(TM))$ is locally diffeomorphic to a product $M=\ell\times M_\lambda\times M_{\mu}$ where $\ell$ is a null geodesic and $M_\lambda, M_\mu$ are two Riemannian manifolds. Moreover, 
 if $\bar{c}=0$ then one of the screen principal curvatures vanishes.
\end{teorema}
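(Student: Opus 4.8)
The plan is to combine the classification already in hand with an integrability analysis of the eigendistributions of $A_\xi^*$. By Theorem \ref{teo:clasif01} the hypersurface is null screen isoparametric, so the two screen principal curvatures $\lambda,\mu$ are constant along $S(TM)$ and we have the $g$-orthogonal splitting $S(TM)=T_\lambda\oplus_{\mathrm{orth}}T_\mu$; adjoining the radical gives
\[
TM=\mathrm{Rad}(TM)\oplus T_\lambda\oplus T_\mu.
\]
The strategy is to show that the three ``pairwise sum'' distributions $T_\lambda\oplus T_\mu=S(TM)$, $\mathrm{Rad}(TM)\oplus T_\lambda$ and $\mathrm{Rad}(TM)\oplus T_\mu$ are all integrable. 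Once this is established, the triple of local submersions whose fibres are the leaves of these distributions assembles into a local diffeomorphism onto a product, and inspecting the pairwise intersections of their kernels identifies the three factors with an integral curve of $\xi$ and with leaves of $T_\lambda$ and $T_\mu$.

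For the screen directions this is largely known: $S(TM)$ is integrable by Theorem \ref{teo:umbilica}, while Lemma \ref{lema:distribuciones} shows that $T_\lambda$ and $T_\mu$ are $\nabla^*$-parallel, hence integrable, with each screen leaf splitting as a Riemannian product $M_\lambda\times M_\mu$ via the local de Rham theorem (here $\nabla^*$ is the Levi-Civita connection of the leaf, as item \ref{prop:propiedades:item5} of Proposition \ref{prop:propiedades} makes it metric on $S(TM)$). The substantial point is the integrability of $\mathrm{Rad}(TM)\oplus T_\lambda$ (and symmetrically $\oplus T_\mu$), which reduces to showing $[\xi,W]\in\mathrm{Rad}(TM)\oplus T_\lambda$ for every $W\in T_\lambda$. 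Using the Weingarten formula in (\ref{eq:gauss1}) together with the hypothesis $\tau|_{S(TM)}=0$ one has $\nabla_W\xi=-A_\xi^*W-\tau(W)\xi=-\lambda W$, so $[\xi,W]=\nabla_\xi W+\lambda W$, and it suffices to prove that $\nabla_\xi W$ has no $T_\mu$-component, i.e.\ $g(\nabla_\xi W,Z)=0$ for all $Z\in T_\mu$.

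I would obtain this by evaluating $g((\nabla_\xi A_\xi^*)W,Z)$ in two independent ways. Expanding $(\nabla_\xi A_\xi^*)W=\nabla_\xi(A_\xi^*W)-A_\xi^*(\nabla_\xi W)$ and using $g(W,Z)=0$, $A_\xi^*Z=\mu Z$ and the symmetry of $A_\xi^*$ yields $g((\nabla_\xi A_\xi^*)W,Z)=(\lambda-\mu)g(\nabla_\xi W,Z)$, the term $\xi(\lambda)g(W,Z)$ dropping out (note $\lambda$ need not be constant along $\xi$). On the other hand, item \ref{item:3lema}(d) of Lemma \ref{lema:Codazzi} gives $g((\nabla_\xi A_\xi^*)W,Z)=-\tau(\xi)g(A_\xi^*W,Z)=-\tau(\xi)\lambda\,g(W,Z)=0$. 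Since $\lambda\neq\mu$, comparing the two forces $g(\nabla_\xi W,Z)=0$, so $\nabla_\xi W\in\mathrm{Rad}(TM)\oplus T_\lambda$ and hence $[\xi,W]\in\mathrm{Rad}(TM)\oplus T_\lambda$; the same computation handles $\mathrm{Rad}(TM)\oplus T_\mu$. I expect this reconciliation of the two evaluations of $g((\nabla_\xi A_\xi^*)W,Z)$ to be the main obstacle, since it is the one place where the non-metric behaviour of $\nabla$ along the radical must be controlled.

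With the three distributions integrable, the Frobenius product argument produces the local diffeomorphism $M\cong\ell\times M_\lambda\times M_\mu$: the factor $\ell$ is an integral curve of $\xi$ because $\mathrm{Rad}(TM)=(\mathrm{Rad}(TM)\oplus T_\lambda)\cap(\mathrm{Rad}(TM)\oplus T_\mu)$, while $M_\lambda$ and $M_\mu$ are leaves of $T_\lambda$ and $T_\mu$, which are Riemannian since $S(TM)$ is spacelike. That $\ell$ is a null geodesic follows from $\bar\nabla_\xi\xi=\nabla_\xi\xi+B(\xi,\xi)N=-\tau(\xi)\xi$ (using $A_\xi^*\xi=0$ and $B(\xi,\xi)=0$), so $\xi$ is a null pregeodesic field whose integral curves are null geodesics after reparametrization. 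Finally, the ``moreover'' is immediate: under the present hypotheses $M$ is null screen isoparametric with $\tau|_{S(TM)}=0$ and exactly $l=2$ distinct screen principal curvatures, so Corollary \ref{class01} applies and, when $\bar c=0$, forces one of $\lambda,\mu$ to vanish.
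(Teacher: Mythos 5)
Your proposal is correct and follows essentially the same route as the paper: Theorem \ref{teo:clasif01} for isoparametricity, Theorem \ref{teo:umbilica} for integrability of $S(TM)$, Lemma \ref{lema:distribuciones} plus the local de Rham theorem for the splitting of the screen leaves into $M_\lambda\times M_\mu$, the pregeodesic character of $\xi$ for the factor $\ell$, and Corollary \ref{class01} for the case $\bar c=0$. The one place you go beyond the paper is the explicit verification (via the two evaluations of $g((\nabla_\xi A_\xi^*)W,Z)$) that $\mathrm{Rad}(TM)\oplus T_\lambda$ and $\mathrm{Rad}(TM)\oplus T_\mu$ are integrable; the paper's proof passes from the leafwise de Rham splitting directly to the three-fold product citing only the (trivial) involutivity of the rank-one radical, so your extra step is a legitimate and welcome tightening rather than a deviation.
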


\begin{proof}
From Theorem \ref{teo:umbilica}, $S(TM)$ is integrable, so let us denote by $M'$ one of its integral manifolds. Let us notice that in view of Lemma \ref{lema:distribuciones} we have that both $T_\lambda$ and $T_\mu$ are parallel distributions on $S(TM)$. Thus, by de Rham's decomposition theorem we have that locally $M'$ is isometric to $M_\lambda\times M_\mu$ where $M_\lambda$, $M_\mu$ are integral manifolds of $T_\lambda$, $T_\mu$.  Moreover, since the integral curves of $Rad (M)$ are pregeodesics with respect to $\bar\nabla$ we have that $Rad(M)$ is involutive and hence $\bar{M}$ is locally isometric to $\ell\times M_\lambda\times M_{\mu}$, where $\ell$ is a null curve. Further, by Theorem \ref{teo:clasif01} and Corollary \ref{class01}, we have that in the case $\bar{c}=0$ one of the null principal curvatures vanishes. \end{proof}

\begin{teorema}\label{teo:class03}
Let $(M,g,S(TM))$ be a null Einstein and screen quasi-conformal hypersurface with adapted quasi-conformal pair $(\varphi ,\psi )$ immersed in a Lorentzian space form of constant curvature $(\bar{M}^{n+2}_{\bar c},\bar{g})$. If $(M,g,S(TM))$ has exactly one screen principal curvature $\lambda$, then $(M,g,S(TM))$ is locally diffeomorphic to a product $M=\ell\times M'$ where $\ell$ is a null geodesic and $M'$ is a Riemannian manifold of constant curvature. Moreover, $M'$ is a totally umbilical submanifold of $(\bar{M}^{n+2}_{\bar c},\bar{g})$ of codimension 2.
\end{teorema}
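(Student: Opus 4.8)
The plan is to leverage the structure results already established: a single screen principal curvature will force $M$ to be totally umbilical, from which the umbilicity and codimension of the leaves follow immediately, while the splitting together with the constant curvature of the leaves will then drop out of the Gauss equation in the space form.

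First I would note that having exactly one screen principal curvature $\lambda$ means $A_\xi^*=\lambda P$ as an endomorphism of $TM$, since $A_\xi^*\xi=0$ and $A_\xi^*X=\lambda X$ on $S(TM)$. Consequently
\[
B(X,Y)=g(A_\xi^*X,Y)=\lambda\,g(X,Y)
\]
for all $X,Y\in\Gamma(TM)$, so that $M$ is totally umbilical in $\bar M$. By Theorem \ref{teo:clasif01} the hypersurface is null screen isoparametric, hence $\lambda$ is constant along $S(TM)$ (equivalently, $\lambda=-\psi/(2\varphi)$ by (\ref{eq:lalambda}) when $\varphi\neq 0$). Theorem \ref{teo:umbilica} then guarantees that the screen distribution is integrable and that each leaf $M'$ is a totally umbilical, non-degenerate, codimension $2$ submanifold of $\bar M$; this establishes the final (``moreover'') assertion. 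From the proof of that theorem the second fundamental form of $M'$ is $h'(X,Y)=g(X,Y)\,H$, with umbilicity vector $H=(\varphi\lambda+\psi)\xi+\lambda N$.

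Next I would produce the product structure. The radical distribution $\mathrm{Rad}(TM)=\mathrm{span}(\xi)$ has rank one, so it is trivially involutive, and the Gauss--Weingarten formulae (\ref{eq:gauss1}) give
\[
\bar\nabla_\xi\xi=\nabla_\xi\xi+B(\xi,\xi)N=-\tau(\xi)\xi,
\]
so that its integral curves are null pregeodesics; reparametrizing yields a null geodesic $\ell$. Since $TM=\mathrm{Rad}(TM)\oplus_{\mathrm{orth}}S(TM)$ is an orthogonal sum of two involutive distributions, the local Frobenius theorem provides a local diffeomorphism $M\cong \ell\times M'$.

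Finally, for the constant curvature of $M'$ I would apply the Gauss equation of $M'\subset\bar M$. Because $\bar M$ has constant curvature $\bar c$ and $M'$ is totally umbilical with $h'(X,Y)=g(X,Y)H$, both the ambient term and the normal-curvature term have the constant-curvature algebraic form, so that $R^{M'}$ becomes $(\bar c+\bar g(H,H))\bigl(g(X,W)g(Y,Z)-g(X,Z)g(Y,W)\bigr)$ (up to the sign fixed by the curvature convention), where
\[
\bar g(H,H)=2\lambda(\varphi\lambda+\psi)
\]
since $\bar g(\xi,\xi)=\bar g(N,N)=0$ and $\bar g(\xi,N)=1$. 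As $\lambda$ is constant along $S(TM)$ by isoparametricity and $\varphi,\psi$ are constant along $S(TM)$ by adaptedness, this coefficient is constant on $M'$, which is therefore a Riemannian manifold of constant curvature (note that Schur's lemma is not needed, as the coefficient is seen to be constant directly). The step I expect to be the main obstacle is the careful bookkeeping in this last Gauss-equation computation: one must handle the Lorentzian signature of the normal $2$-plane $\mathrm{span}(\xi,N)$ correctly when evaluating $\bar g(H,H)$, and verify that the resulting coefficient is genuinely constant rather than merely constant along the screen --- which is precisely what the adapted and isoparametric hypotheses deliver.
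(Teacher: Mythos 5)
Your proposal is correct and follows the same overall strategy as the paper: a single screen principal curvature forces $B=\lambda g$, hence $M$ is totally umbilical, Theorem \ref{teo:umbilica} gives the integrability and umbilicity of the leaves, and a Gauss-type computation yields constant curvature of $M'$. Two points of comparison are worth recording. First, you supply an explicit argument for the local splitting $M\cong\ell\times M'$ (involutivity of $\mathrm{Rad}(TM)\oplus_{\mathrm{orth}}S(TM)$ plus the pregeodesic property $\bar\nabla_\xi\xi=-\tau(\xi)\xi$), whereas the paper's proof of this theorem leaves that step implicit, relying on the analogous argument in the two-curvature case; your version is more self-contained. Second, and more substantively, you compute the curvature of $M'$ via the direct Gauss equation of the codimension-two nondegenerate submanifold $M'\subset\bar M$, obtaining $\bar c+\bar g(H,H)=\bar c+2\lambda(\varphi\lambda+\psi)$, while the paper chains equations (\ref{eq:2.21Ciriaco}) and (\ref{eq:2.22Ciriaco1}) and arrives at $\bar c+\lambda(\varphi\lambda+\psi)$. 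These disagree by a factor of $2$ in the second term; tracing the paper's first line, the passage $g(\bar R(X,Y)Z,W)=g(R(X,Y)Z,W)$ discards the terms $B(X,Z)C(Y,W)-B(Y,Z)C(X,W)$ coming from (\ref{eq:2.21Ciriaco}), which do not vanish here, so your value appears to be the correct one (it also matches the classical formula for totally umbilical submanifolds). This discrepancy does not affect the theorem as stated --- both computations give a coefficient that is constant on $M'$ by isoparametricity and adaptedness --- but it would alter the explicit constant $(k-\bar c)/(n-1)$ and the subsequent corollary in the paper.
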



\begin{proof}
Let $\lambda$ be the only screen principal direction. Notice that in this case $M$ is totally umbilical. Hence, by Theorem \ref{teo:umbilica}  we have that $M'=M_\lambda$ is totally umbilical as a codimension 2 submanifold of $\bar{M}_{\bar c}^{n+2}$.

On the other hand, by equations (\ref{eq:2.21Ciriaco}) and (\ref{eq:2.22Ciriaco1}) we have that for all $X,Y,Z\in T_\lambda $, $W\in\Gamma (TM')$
\begin{eqnarray*}
g(\bar{R}(X,Y)Z,W) &=& g(R(X,Y),Z,W)\\
&=& g (R^*(X,Y)Z,W) +\lambda (\varphi\lambda +\psi)g(X,Z)g(Y,W)\\ &&-\lambda (\varphi\lambda +\psi)g(Y,Z)g(X,W).
\end{eqnarray*}
Thus
\begin{equation*}
R^*(X,Y)Z=[\lambda (\varphi\lambda+\psi )+\bar{c}](g(Y,Z)X-g(X,Z)Y).
\end{equation*}
Moreover, let us notice that equations (\ref{eq:unasolalambda}) and (\ref{eq:lalambda}) imply
\begin{equation*}
\lambda (\lambda\varphi+\psi)+\bar{c}=-\frac{\psi^2}{4\varphi}+\bar{c}=\frac{k-\bar{c}}{n-1}
\end{equation*}
and therefore $M'$ is a Riemannian manifold of constant curvature ${\displaystyle \frac{k-\bar{c}}{n-1}}$.
\end{proof}

\begin{corolario}
There are no Ricci flat null hypersurface $(M,g,S^*(TM))$ in de Sitter space $\mathbb{S}^{n+2}_1=(\bar{M}^{n+2}_1,\bar{g})$ having exactly one screen principal curvature.
\end{corolario}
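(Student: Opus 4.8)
The plan is to derive a numerical contradiction from the Einstein condition once the warped product structure of de Sitter space is taken into account. First I would record the two fixed data: \emph{Ricci flat} forces the Einstein factor to vanish, $k=0$, and $\bar c=1$ for $\mathbb{S}^{n+2}_1$. Since we work with the canonical screen $S^*(TM)$ and $\mathbb{S}^{n+2}_1=-\mathbb{R}\times_{\cosh}\mathbb{S}^{n+1}$, Example \ref{ejemploGRW} fixes the quasi-conformal pair to be $\varphi\equiv 1$ and $\psi=\varrho'/\varrho=\tanh$ (evaluated along the defining graph), exactly the adapted pair appearing in Example \ref{ejem:hipernulas}; in particular $\psi^2<1$ at every point of $M$.

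Next I would exploit the single screen principal curvature $\lambda$. Since then $\operatorname{tr}A_\xi^*=n\lambda$, substituting $\varphi=1$, $\bar c=1$ and $k=0$ into equation (\ref{eq:lambda}) collapses it to the quadratic
\[
\lambda^2+\psi\lambda+\frac{n}{n-1}=0,
\]
which is precisely the Ricci flat instance of the factor $n\bar c+(n-1)(\varphi\lambda+\psi)\lambda$ computed in Example \ref{ejem:hipernulas}. Because $\lambda$ is an eigenvalue of the $g$-symmetric operator $A_\xi^*$ it is real, so the discriminant must satisfy $\psi^2-\frac{4n}{n-1}\ge 0$, i.e. $\psi^2\ge\frac{4n}{n-1}$.

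The contradiction is then immediate, since $\frac{4n}{n-1}>4>1>\psi^2$ for every $n\ge 2$, so no real $\lambda$ can solve the quadratic and the hypersurface cannot exist. Equivalently, and more in the spirit of the preceding theorem, I would apply Theorem \ref{teo:class03} directly: it would render the leaf $M'$ a Riemannian space form of constant curvature $\frac{k-\bar c}{n-1}=-\frac{1}{n-1}<0$, whereas the Gauss equation for the totally umbilical, spacelike $M'$ together with $\varphi=1$ and $\psi^2<1$ produces a strictly positive constant curvature. The two values coincide only if $\psi^2=\frac{4n}{n-1}$, which the warping rules out.

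The main obstacle is not the terminal inequality, which is trivial, but securing the two structural inputs that drive it: that the canonical screen $S^*(TM)$ forces $\varphi\equiv 1$, and that the warping $\varrho=\cosh$ confines $\psi^2$ strictly below $1$ (equivalently, that the minimal value $-\psi^2/4$ of the quadratic stays above the target $-\frac{n}{n-1}$). Both follow from the warped product presentation of $\mathbb{S}^{n+2}_1$ and the expressions for $A_\xi^*$ and $A_N$ recorded in Examples \ref{ejemploGRW} and \ref{ejem:hipernulas}. One should also verify that the single-screen-principal-curvature hypothesis legitimately places us in the regime where (\ref{eq:lambda}) applies with $\operatorname{tr}A_\xi^*=n\lambda$.
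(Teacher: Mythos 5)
Your argument is correct, and your primary route is genuinely different from the paper's. The paper proves this corollary in one line: it invokes Theorem \ref{teo:class03} to conclude that the leaf $M'$ of $S^*(TM)$ is a space form of constant curvature $\frac{k-\bar c}{n-1}=-\frac{1}{n-1}<0$, and then observes that this is absurd because the leaves $S_t$ sit (totally umbilically) inside the round spheres $\{t\}\times\mathbb{S}^{n+1}$, hence must have positive curvature. Your secondary remark is essentially this argument. Your main argument instead bypasses Theorem \ref{teo:class03} altogether: from equation (\ref{eq:lambda}) with $\operatorname{tr}A_\xi^*=n\lambda$, $\varphi=1$, $k=0$, $\bar c=1$ you get $\lambda^2+\psi\lambda+\frac{n}{n-1}=0$, whose discriminant $\psi^2-\frac{4n}{n-1}$ must be nonnegative for the real eigenvalue $\lambda$ to exist; since the warping $\varrho=\cosh$ bounds $\psi^2$ well below $4<\frac{4n}{n-1}$, no such $\lambda$ exists. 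This is equivalent to noting that the paper's own condition (\ref{eq:unasolalambda}) would force $\psi^2=\frac{4n}{n-1}$, which the warping rules out. What your route buys is independence from the delicate step in the proof of Theorem \ref{teo:class03} (where ``exactly one screen principal curvature'' is converted into a vanishing discriminant) and from any curvature comparison for the slices; what it costs is reliance on the explicit bound for $\psi$. One small caveat: the paper is internally inconsistent about the normalization of $\psi$ for the canonical screen --- Example \ref{ejemploGRW} gives $A_N-A_\xi^*=\sqrt{2}\,\frac{\varrho'}{\varrho}P$, i.e. $\psi=\sqrt{2}\tanh$, while Example \ref{ejem:hipernulas} uses $\psi=\varrho'/\varrho=\tanh$. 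You adopted the latter, but your inequality is robust to this factor, since in either case $\psi^2<2<4<\frac{4n}{n-1}$, so the conclusion stands.
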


\begin{proof}
By Theorem \ref{teo:class03} we have that in this case $M'\subset S_t$ is a Riemannian manifold of constant curvature ${\displaystyle -\frac{{1}}{n-1}}<0$,  which is absurd since $S_t$ is a round sphere.
\end{proof}

\begin{corolario}
Let $(M,g,S^*(TM))$ be a null Einstein hypersurface with $\bar c=1,-1$, $n>2$. If $\psi$ changes sign on $M$ and
\begin{equation*}
{{k>\bar cn}>0},
\end{equation*}
then $(M,g,S^*(TM))$ is locally diffeomorphic to a product $M=\ell\times M'$ where $\ell$ is a null geodesic and $M'$ is a Riemannian manifold of constant curvature.
\end{corolario}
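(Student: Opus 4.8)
The plan is to show that the stated hypotheses force $M$ to have exactly one screen principal curvature, and then to invoke Theorem \ref{teo:class03}. First I would fix the setting: a Lorentzian space form $(\bar M_{\bar c}^{n+2},\bar g)$ may be written as a GRW spacetime, and for the screen $S^*(TM)$ of Example \ref{ejemploGRW} the associated quasi-conformal pair is $(\varphi,\psi)=(1,\varrho'/\varrho)$, which is adapted. In particular $\varphi\equiv 1\neq 0$, so Proposition \ref{quadratic} applies and $M$ has at most two distinct screen principal curvatures. Note also that $\bar c n>0$ singles out the de Sitter case $\bar c=1$, while the inequality that really drives the argument is $k-\bar c n>0$.

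The core of the argument is to discard the possibility of exactly two distinct screen principal curvatures $\lambda\neq\mu$, with multiplicities $m_\lambda,m_\mu$ satisfying $m_\lambda+m_\mu=n$. Here I would combine two consequences of (\ref{eq:soluciones}) (valid since $\varphi=1\neq 0$). On one hand $\lambda\mu=k-\bar c n>0$, so $\lambda$ and $\mu$ never vanish and, being continuous with strictly positive product, share a common sign which is constant on connected $M$. On the other hand, substituting $\text{tr}\,A_\xi^*=m_\lambda\lambda+m_\mu\mu$ into $\lambda+\mu=\text{tr}\,A_\xi^*+(n-1)\psi$ yields
\[
(n-1)\psi=(1-m_\lambda)\lambda+(1-m_\mu)\mu .
\]
Since $m_\lambda,m_\mu\geq 1$ and $n>2$ prevents both multiplicities from equalling $1$, both coefficients are $\leq 0$ with at least one strictly negative; hence the right-hand side has sign opposite to the common sign of $\lambda,\mu$, and so $\psi$ has a fixed sign on $M$. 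This contradicts the assumption that $\psi$ changes sign, so the two-curvature case is impossible and $M$ carries a single screen principal curvature $\lambda$.

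It then remains to apply Theorem \ref{teo:class03}, which yields the local product $M=\ell\times M'$ with $\ell$ a null geodesic and $M'$ a Riemannian manifold of constant curvature. Equivalently, one may argue directly: Theorem \ref{teo:clasif01} makes $M$ isoparametric, so $\lambda$ is constant along $S^*(TM)$; then equations (\ref{eq:2.21Ciriaco}) and (\ref{eq:2.22Ciriaco1}), combined with $A_N=A_\xi^*+\psi P$, give $R^*(X,Y)Z=\kappa\,(g(Y,Z)X-g(X,Z)Y)$ with $\kappa=\lambda(\lambda+\psi)+\bar c$ constant along the leaves, so each leaf $M'$ of $S^*(TM)$ has constant curvature, while the integral curves of $\mathrm{Rad}(M)$ supply the null geodesic factor.

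I expect the delicate point to be the sign bookkeeping that eliminates the two-curvature case, and specifically the claim that the common sign of $\lambda$ and $\mu$ is globally constant. This presupposes that the number of distinct screen principal curvatures is locally constant, so that $\lambda,\mu$ depend continuously on the point; I would handle it by restricting to the open set where the multiplicity pattern is stable and using connectedness together with $\lambda\mu=k-\bar c n>0$ to propagate the sign, thereby converting the global hypothesis that $\psi$ changes sign into the desired contradiction.
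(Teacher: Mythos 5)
Your proposal is correct and follows essentially the same route as the paper: use Proposition \ref{quadratic} to bound the number of distinct screen principal curvatures by two, rule out the two-curvature case because $\lambda\mu=k-\bar c n>0$ forces a common sign while the trace identity forces $\psi$ to have a definite sign (contradicting the sign change), and then invoke Theorem \ref{teo:class03}. Your identity $(n-1)\psi=(1-m_\lambda)\lambda+(1-m_\mu)\mu$ is in fact the correctly normalized form of the paper's equation (\ref{eq:traza}), and your closing remark about the local constancy of the multiplicity pattern makes explicit a point the paper leaves implicit.
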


\begin{proof}
Let us assume $(M,g,S^*(TM))$ has exactly two screen principal curvatures $\lambda$, $\mu$. According to equations (\ref{eq:soluciones}) the condition $k>\bar cn$ implies that $\lambda$ and $\mu$ have the same sign (recall that $\varphi\equiv 1$ in $S^*(TM)$), and in particular neither $\lambda$ nor $\mu$ can be zero. Moreover, we have that
\begin{equation}\label{eq:traza}
(p-1)\lambda+(n-p-1)\mu =\frac{\psi}{\varphi}
\end{equation}
for some $1<p<n-1$ and hence the left hand side of equation (\ref{eq:traza}) has a definite sign. However, the right hand side of equation (\ref{eq:traza}) changes sign by hypothesis. Thus we can conclude that there is only one screen principal curvature and the results follows from Theorem \ref{teo:class03}.
\end{proof}

\section*{Acknowledgements}

Didier A. Solis would like to thank the Centro de Investigaci\'on en Matem\'aticas (CIMAT) unidad M\'erida for the kind hospitality during a sabbatical leave in which part of this research was developed.

\section*{References}

\bibliographystyle{plain}
\bibliography{references5}

\end{document}